\newtheorem{theorem}{Theorem}[section]
\newtheorem{lemma}[theorem]{Lemma}
\newtheorem{proposition}[theorem]{Proposition}
\newtheorem{conjecture}[theorem]{Conjecture}
\newtheorem{corollary}[theorem]{Corollary}
\theoremstyle{definition} 
\newtheorem{definition}[theorem]{Definition}
\newtheorem{example}[theorem]{Example}
\newtheorem{remark}[theorem]{Remark}
\newtheorem{definition-theorem}[theorem]{Definition-Theorem}
\newtheorem{definition-lemma}[theorem]{Definition-Lemma}
\numberwithin{equation}{section}
\newcommand{\C}{\mathbb{C}}
\newcommand{\R}{\mathbb{R}}
\newcommand{\Z}{\mathbb{Z}}
\newcommand{\Q}{\mathbb{Q}}
\def\P{\mathbb{P}}
\def\div{\operatorname{div}}
\DeclareMathOperator{\Spec}{Spec}
\DeclareMathOperator{\Supp}{Supp}
\DeclarePairedDelimiterX{\norm}[1]{\lVert}{\rVert}{#1}
\newcommand{\ceil}[1]{\left\lceil #1 \right \rceil}
\let\oldframe\frame
\renewcommand\frame[1][allowframebreaks]{\oldframe[#1]}
\title[On volumes and the generic invariance of Fano type varieties]{On volumes and the generic invariance of Fano type varieties}
\begin{document}

\author[D.~Kim]{Donghyeon Kim}
\address[Donghyeon Kim]{Department of Mathematics, Yonsei University, 50 Yonsei-ro, Seodaemun-gu, Seoul 03722, Republic of Korea}
\email{narimial0@gmail.com}
\thanks{The author is grateful to the anonymous referee for their thoughtful comments and suggestions. D. Kim is partially supported by the Samsung Science and Technology Foundation under Project
Number SSTF-BA2302-03. }

\date{\today}
\subjclass[2010]{14E05, 14E30, 14J45, 14G17}
\keywords{}

\begin{abstract}
We demonstrate the generic invariance of the Fano type property in the case where the volumes of anti-canonical divisors of Fano type fibers are a constant over a Zariski-dense subset, or the Fano type fibers are of dimension $2$. Additionally, paralleling this theorem, we establish a conjecture by Schwede and Smith under the condition that the volumes of anti-canonical divisors remain constant in the reduction mod $p$.
\end{abstract}

\maketitle
\allowdisplaybreaks

\section{Introduction}
The advancement of birational geometry and the Minimal Model Program (MMP) allows us to study the boundedness of varieties. Hacon, McKernan, and Xu proved a boundedness result for varieties with ample canonical divisors in \cite{HMX18}, while Birkar established in \cite{Bir19} and \cite{Bir24} that Fano type (fibration) maintains a good boundedness property. Here, a projective variety $X$ is of \emph{Fano type} if there is an effective $\Q$-Weil divisor $\Delta$ on $X$ with $(X,\Delta)$ being klt and $-(K_X+\Delta)$ ample.

\smallskip

A set of varieties $\mathcal{S}:=\{X_i\}_{i\in I}$ is \emph{bounded} if there exists a projective morphism $V\to W$ of varieties such that $X_i$ is a closed fiber of $V\to W$ for each $i\in I$. The next step after assessing boundedness is to inquire if $X_i$ has a good property; then so does the geometric generic fiber of $W$. For instance, if $X_i$ is Fano for each $i\in I$, the geometric generic fiber of $W$ is also Fano (cf. \cite{Laz04a}*{Theorem 1.2.17}).

\smallskip

Fano type varieties extend the concept of Fano in the context of MMP. If $X$ is Fano, $D$ a Cartier divisor on $X$, and $X\dashrightarrow X'$ a step in $D$-MMP, then generally $X'$ is not Fano. However, $X'$ is of Fano type. Thus, studying Fano type varieties follows naturally from the study of Fano varieties.

\smallskip

One may propose the following conjecture:

\begin{conjecture}\label{conj1}
Let $X\to S$ be a projective and surjective morphism between two normal varieties over $\C$, and let $S'\subseteq S$ be a Zariski-dense subset of $S$ such that for any closed point $s\in S'$, the fiber $X_s$ is of Fano type. Then the geometric generic fiber $X_{\overline{\eta}}$ is of Fano type, where $\eta\in S$ is the generic point.
\end{conjecture}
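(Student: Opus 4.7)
The plan is to reduce the Fano type property on $X_{\overline{\eta}}$ to the existence of an effective $\Q$-divisor $\Delta$ on $X$, defined over an open neighborhood $U\subseteq S$ of $\eta$, such that $(X|_U,\Delta|_U)$ is klt and $-(K_{X/S}+\Delta)|_U$ is $\pi$-ample; restricting to $X_{\overline{\eta}}$ will then produce the required witness. The strategy is to interpolate the fiberwise boundaries $\Delta_s$ (for $s\in S'$) into such a relative boundary via a boundedness-and-spreading-out argument.

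First I would carry out standard reductions: using Stein factorization and generic flatness, and after shrinking $S$, I may assume that $S$ is smooth, $\pi$ is flat with geometrically integral normal fibers, and (since the closed points of $S'$ are still Zariski-dense) that $S'$ consists of closed points of $S$. For each $s\in S'$ I fix a witness $\Delta_s$ with $(X_s,\Delta_s)$ klt and $-(K_{X_s}+\Delta_s)$ ample; applying Birkar's theorem on boundedness of complements I would replace $\Delta_s$ by an $N$-complement of uniform index $N$, provided enough uniformity in $s$ is available.

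The heart of the argument is to exhibit the pairs $(X_s,\Delta_s)_{s\in S'}$ as members of a single bounded family $(\mathcal{Y},\mathcal{D})\to T$. Given such a family, the relative Isom scheme between $X/S$ and $\mathcal{Y}/T$ is of finite type over $S\times T$; a constructibility argument then shows that, possibly after a generically finite base change $S_1\to S$, the locus of $s\in S_1$ over which $X_s\simeq \mathcal{Y}_{t(s)}$ (as pairs) contains a Zariski-open subset $U$ with $\eta\in U$. Descending the universal $\mathcal{D}$ along $U\to T$ then yields the required $\Delta$ on $X|_U$, and hence on $X_{\overline{\eta}}$.

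The main obstacle is precisely the boundedness step: without uniform control on $\vol(-K_{X_s})$ and on the singularities of the $X_s$ along $S'$, Birkar's BAB is not applicable and the family $\{X_s\}_{s\in S'}$ need not be bounded, in which case the spreading-out breaks down. This is why the paper restricts to the two special cases of its main theorem. In the constant-volume case, BAB combined with generic constancy of log canonical thresholds across $S'$ supplies both a volume bound and a uniform $\epsilon$-lc bound, which is enough to bound the pairs. In dimension two, Fano type surfaces are automatically bounded (with $\vol(-K)\leq 9$) and admit $N$-complements for $N\in\{1,2,3,4,6\}$, which circumvents the most delicate higher-dimensional input. Without one of these assumptions I do not see how to force boundedness, and the full conjecture appears to lie beyond the current technology.
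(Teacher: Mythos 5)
The statement you are addressing is a conjecture the paper does not prove in general; it establishes only two special cases (Theorem~\ref{imain1} with constant volume, Theorem~\ref{surface} in dimension two). Your write-up correctly identifies the crux---without some uniformity the family $\{X_s\}_{s\in S'}$ need not be bounded, and the spreading-out of boundaries breaks---and you honestly concede that the full conjecture is out of reach. So this is not a proof of the stated conjecture, and you know it.

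Where I would push back is on your description of \emph{how} the two special cases are handled, because your proposed mechanism (BAB, bounded $N$-complements, an Isom-scheme/constructibility argument to transport a universal boundary) is not how the paper actually proves Theorems~\ref{imain1} and~\ref{surface}, and I do not think your route closes either case as sketched. For the constant-volume case the paper never invokes BAB or complements at all: it uses the Fano-type criterion of \cite{CJK23}*{Theorem 1.3}, which reduces the problem to showing $A_{X_{\overline\eta},\Delta_{\overline\eta}}(E)-\sigma_E(-(K_{X_{\overline\eta}}+\Delta_{\overline\eta}))>0$ for every divisor $E$ over $X_{\overline\eta}$. Log discrepancies are matched fiber-to-generic-fiber by a spreading-out of a log resolution (Proposition~\ref{disc 1}), and the inequality $\sigma_{E_s}(D_s)\ge\sigma_E(D_{\overline\eta})$ is obtained from Lemma~\ref{Jiao} together with Jiao's theorem that $\vol(D_{\overline\eta})=\inf_{s\in S'}\vol(D_s)$. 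The constant-volume hypothesis enters precisely here, to force $\vol(D_s)=\vol(D_{\overline\eta})$, so that a drop $\vol(f^*_sD_s-aE_s)<\vol(D_s)$ propagates to a drop $\vol(f^*_{\overline\eta}D_{\overline\eta}-aE)<\vol(D_{\overline\eta})$. Your proposed ``BAB plus generic constancy of lct gives a uniform $\epsilon$-lc bound'' is both unsubstantiated (generic constancy of lct over a merely Zariski-dense $S'$ is exactly the kind of semicontinuity across a sparse set that is hard to get) and not needed in the paper's route. For the surface case the paper follows \cite{GT16}: it spreads out the Zariski decomposition $-K_{X_{\overline\eta}}=P+N$, shows $N_s\le N(s)$, deduces that $(X_{\overline\eta},N)$ is klt via the usual klt-in-families theorem, and finishes by proving $P_s$ is nef and hence $P$ is big and nef. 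No Isom scheme, no $N\in\{1,2,3,4,6\}$ complements.

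Concretely, the gaps in your sketch beyond the one you acknowledge are: (i) even assuming constant volume, you have not explained how to get uniform singularity bounds along a Zariski-dense but possibly non-open $S'$ so that BAB applies; (ii) the descent step via the relative Isom scheme produces an isomorphism of varieties $X_s\simeq\mathcal{Y}_{t(s)}$, but you need to transport the boundary $\mathcal{D}_{t(s)}$ back to $X_s$ compatibly over an open $U\ni\eta$, and it is not clear the locus where such a compatible choice exists is constructible and dense, nor that it can be made to contain $\eta$; and (iii) your argument, even if completed, would only produce a complement over an open $U$, whereas the hypothesis only gives information on a dense $S'$, which may avoid every nonempty open set in codimension-one strata. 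The paper's criterion-based approach avoids all three issues by working divisor-by-divisor over $X_{\overline\eta}$ and comparing asymptotic invariants, rather than trying to glue fiberwise boundaries into a relative one.
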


Note that the conjecture is equivalent to \cite{CLZ25}*{Question 1.1}. When $S'$ is an open subset of $S$, the conjecture holds by the semicontinuity property of cohomology, leaving us to deal with scenarios where $S'$ can be very sparse.

\smallskip

Let $X$ be a projective variety over an algebraically closed field $k$, and let $D$ be a Weil divisor on $X$. Then the volume of $D$ is defined as
$$ \mathrm{vol}(D):=\limsup_{m\to \infty}\frac{\dim_k H^0(X,\mathcal{O}_X(mD))}{\frac{m^{\dim X}}{(\dim X)!}}.$$

\smallskip

Our first result on the conjecture is as follows:

\begin{theorem}\label{imain1}
Suppose $(X,\Delta)$ is a projective pair over an algebraically closed field $k$ of characteristic $0$. Let $X\to S$ be a projective and surjective morphism between two normal varieties over $k$. Then the following are equivalent:

\begin{itemize}
    \item[\emph{(a)}] There exists a Zariski-dense subset $S' \subseteq S$ and a positive number $v>0$ satisfying the following conditions. For any closed point $s \in S'$:
\begin{itemize}
    \item[\emph{(1)}] The volume is given by $\mathrm{vol}(-(K_{X_s}+\Delta_s))=v$.
    \item[\emph{(2)}] The fiber $(X_s,\Delta_s)$ is of Fano type.
\end{itemize}
\item[\emph{(b)}] The geometric generic fiber $(X_{\overline{\eta}},\Delta_{\overline{\eta}})$ of $(X,\Delta)$ is of Fano type, where $\eta\in S$ is the generic point of $S$.
\end{itemize}
\end{theorem}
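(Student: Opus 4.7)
My plan separates the two implications. The direction (b) $\Rightarrow$ (a) is a routine spreading-out argument: given a klt boundary $\Gamma_{\bar\eta}$ on $X_{\bar\eta}$ with $-(K_{X_{\bar\eta}} + \Delta_{\bar\eta} + \Gamma_{\bar\eta})$ ample, one extends $\Gamma_{\bar\eta}$ to a relative boundary $\Gamma$ on $X_U \to U$ over some non-empty open $U \subseteq S$ using standard spreading-out, then uses openness of klt-ness and relative ampleness to conclude that every closed fiber over $U$ is Fano type. Relative ampleness of $-(K_{X_U/U} + \Delta_U + \Gamma_U)$ combined with flatness makes $\mathrm{vol}(-(K_{X_s} + \Delta_s))$ equal to a fixed top intersection number on $U$, so a constant $v$ works and one takes $S' = U \cap S(\bar{k})$.

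The substantive direction is (a) $\Rightarrow$ (b). The first step is to reduce to the case of a smooth base curve: taking a general complete intersection curve $C \subseteq S$ meeting $S'$ in a Zariski-dense subset and replacing $(X,\Delta)$ by its pullback preserves (a), while Fano type on the geometric generic fiber over $C$ forces Fano type over $S$ by base change to any algebraic closure of $K(S)$ containing $K(C)$. On the curve $S$, the aim is to produce, after shrinking, a relative $\mathbb{Q}$-boundary $\Gamma$ on $X/S$ with $(X,\Delta+\Gamma)$ relatively klt and $-(K_{X/S}+\Delta+\Gamma)$ relatively ample, whose restriction to the generic fiber witnesses Fano type. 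To construct $\Gamma$, one chooses a klt anti-ample boundary $\Gamma_s$ exhibiting Fano type on each $(X_s,\Delta_s)$ for $s\in S'$, and then uses the constant-volume hypothesis together with Birkar's bounded-complements theorem from \cite{Bir19} to take each $\Gamma_s$ as an $n$-complement for a fixed $n=n(\dim X_s,v)$. Parametrizing these in a suitable Hilbert scheme and passing to a limit along specialization in $S$ yields the desired relative $\Gamma$, from which the Fano-type structure on $X_{\bar\eta}$ follows by restriction.

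The main obstacle is extracting the uniformity required for boundedness from the constant-volume hypothesis alone. Fano type together with bounded volume does not in general imply boundedness of the family, as a uniform $\epsilon$-lc hypothesis on singularities is what fuels Birkar's BAB theorem. The plan to circumvent this is to leverage the exact constancy of $v$ on a dense subset together with lower semi-continuity of volumes under specialization: a hypothetical sequence of fibers whose log canonical thresholds tend to zero would, upon taking a limit, force the volume on some degeneration to strictly exceed $v$, contradicting constancy on a Zariski-dense set. Making this rigorous — and converting it into the uniform $\epsilon$-lc bound needed to invoke \cite{Bir19, Bir24} — is the delicate point, and I expect the technical heart of the proof to live here; the subsequent construction of $\Gamma$ and descent to $X_{\bar\eta}$ should then be relatively routine.
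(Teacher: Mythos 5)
Your proposal diverges from the paper on both implications, and both divergences contain genuine gaps.

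For (b)$\Rightarrow$(a), you claim that after extending a klt boundary $\Gamma$ to an open set $U\subseteq S$ with $-(K_{X_U/U}+\Delta_U+\Gamma_U)$ relatively ample, flatness makes $\mathrm{vol}(-(K_{X_s}+\Delta_s))$ equal to a fixed top intersection number. This does not follow: it is $-(K_{X_s}+\Delta_s+\Gamma_s)$ that is ample, not $-(K_{X_s}+\Delta_s)$, and the latter is merely big, so its volume is not a top self-intersection and there is no reason for it to be locally constant. The paper addresses exactly this issue by running a $-(K_{X_{\overline\eta}}+\Delta_{\overline\eta})$-MMP on the generic fiber, spreading out the MMP steps and the ensuing semiample fibration $X_n\to Y$ over an open subset of $S$, then showing the volume of the ample divisor $H$ on $Y_s$ is constant (via Kodaira vanishing and semicontinuity of cohomology), and finally transferring back along the negative contractions. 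Your plan skips the entire MMP step and the argument does not close without it.

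For (a)$\Rightarrow$(b), your strategy is fundamentally different from the paper's and rests on two unproven claims. First, the reduction to a curve $C\subseteq S$: the generic point $\eta_C$ of $C$ is \emph{not} the generic point $\eta$ of $S$ (unless $\dim S=1$), so Fano type of $X_{\overline{\eta_C}}$ is a statement about a codimension-$(\dim S-1)$ specialization of $X_{\overline\eta}$, and Fano type is not preserved under such degeneration in the direction you need. Second, you assert that constancy of the volume forces a uniform $\epsilon$-lc bound and hence boundedness, so that Birkar's $n$-complement theorem applies. You correctly flag this as the delicate point, but you do not supply an argument, and in fact the paper is structured so as to avoid needing this. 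The paper's approach is to invoke the Fano-type criterion of \cite{CJK23}*{Theorem 1.3} (Lemma~\ref{Fano type}), which reduces the question to showing $A_{X_{\overline\eta},\Delta_{\overline\eta}}(E)-\sigma_E(-(K_{X_{\overline\eta}}+\Delta_{\overline\eta}))>0$ for every prime divisor $E$ over $X_{\overline\eta}$. The inequality for $\sigma_E$ is then obtained by combining upper semicontinuity of $h^0$, the volume characterization of $\sigma_E$ (Lemma~\ref{Jiao}), and Jiao's theorem (Theorem~\ref{main111}) that $\mathrm{vol}(D_{\overline\eta})=\inf_{s\in S'}\mathrm{vol}(D_s)$ on a dense set. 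The constant-volume hypothesis is used precisely to make these volume comparisons tight, and no boundedness or bounded-complement input is needed at all. Your route could conceivably be made to work, but proving the required $\epsilon$-lc bound from volume constancy would itself be a substantial theorem, whereas the paper sidesteps the issue entirely.
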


Note that if condition (1) in the theorem holds, then Conjecture \ref{conj1} follows immediately.

\smallskip

Moreover, we can prove Conjecture \ref{conj1} in the dimension $2$ case without any conditions on the volumes of anti-canonical divisors.

\begin{theorem} \label{surface}
Conjecture \ref{conj1} holds if the Fano type fibers have dimension $2$.
\end{theorem}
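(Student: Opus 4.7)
The plan is to apply Theorem~\ref{imain1}: condition (a)(2) is given, so it suffices to find a Zariski-dense subset $S'' \subseteq S'$ and a positive constant $v$ with $\vol(-K_{X_s}) = v$ for every closed $s \in S''$. We may assume $S$ is normal and irreducible. Since $S'$ is Zariski-dense in $S$, it is enough to find a Zariski-open dense $U \subseteq S$ on which $s \mapsto \vol(-K_{X_s})$ is constant; then $S'' := S' \cap U$ is automatically Zariski-dense in $S$, since otherwise $S'$ would be contained in the proper closed subset $\overline{S' \cap U} \cup (S \setminus U)$.

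In dimension~$2$, I would establish this constancy via a constructible stratification of $S$. The klt singularities of Fano type surfaces are quotient singularities (Kawamata), and in a Noetherian flat family only finitely many analytic types of such singularities appear. Stratifying $S$ by the fiber singularity type reduces us to a stratum $S_\alpha$ on which the fibers $X_s$ share a common singularity profile. After shrinking, we may take a simultaneous log resolution $\tilde X_\alpha \to X \times_S S_\alpha$ with locally constant exceptional divisors $E_i$ and discrepancies $a_i$; on $\tilde X_\alpha$ the relative canonical divisor is Cartier, and intersection numbers on fibers are locally constant by flatness.

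To pass from constancy of intersection numbers on $\tilde X_\alpha$ to constancy of $\vol(-K_{X_s})$, I would run a relative $(-K)$-MMP over $S_\alpha$, which in dimension $2$ terminates and produces a relative weak log Fano model $X^{\min}_\alpha \to S_\alpha$ with $-K_{X^{\min}_\alpha/S_\alpha}$ relatively nef and big. Then $\vol(-K_{X_s}) = (-K_{X^{\min}_{\alpha,s}})^2$ is locally constant in $s$ by flatness, since $K_{X^{\min}_\alpha/S_\alpha}$ is $\Q$-Cartier. Taking the Zariski-open dense stratum $S_{\alpha_0}$ and setting $S'' := S' \cap S_{\alpha_0}$ gives a Zariski-dense subset of $S$ on which the volume is the constant $v := (-K_{X^{\min}_{\alpha_0,s}})^2$, and Theorem~\ref{imain1} concludes.

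The main obstacle I foresee is justifying the relative MMP: we have the Fano type hypothesis only on a possibly-sparse $S'$, so without knowing a priori that the geometric generic fiber $X_{\overline{\eta}}$ has $-K_{X_{\overline{\eta}}}$ big, a relative MMP over $S_\alpha$ cannot be run in the standard way. One would circumvent this by arguing first that bigness of $-K_{X_s}$ on a Zariski-dense set of closed $s$ propagates to relative bigness of $-K_{X/S_\alpha}$ on a dense open of $S_\alpha$, using semicontinuity properties of $\vol$ in the flat family together with Kawamata-Viehweg vanishing (available on klt Fano type varieties) and the locally constant Hilbert polynomial of $\mathcal{O}(-mK)$ on the simultaneous resolution. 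Once this is established, the relative surface MMP goes through and the argument is complete.
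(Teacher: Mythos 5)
Your proposal takes a genuinely different route from the paper, which does \emph{not} reduce Theorem~\ref{surface} to Theorem~\ref{imain1}. The paper instead follows \cite{GT16}*{Proof of Theorem 1.1}: it takes the Zariski decomposition $-K_{X_{\overline{\eta}}} = P + N$ on the geometric generic fiber, spreads $P,N$ out over $S$, proves $N_s \le N(s)$ (the negative part can only grow at special fibers), and then uses the klt complement $\Delta(s)$ on each Fano type fiber, together with $\Delta(s) \ge N(s) \ge N_s$, to conclude that $(X_{\overline{\eta}}, N)$ is klt; separately it shows $P_s$ is nef for $s \in S'$ via \cite{GT16}*{Lemma 4.4} and deduces $P$ is big from $P^2 = P_s^2 > 0$. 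No constant-volume input and no MMP is used. Your reduction to Theorem~\ref{imain1} is an attractive alternative strategy, but as written it has a gap at precisely the point you flag.

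The gap is that the relative $-K$-MMP over a stratum $S_\alpha$ needs $-K$ to be relatively big (or at least relatively pseudo-effective in a strong sense), and the argument you sketch for establishing this is circular or insufficient. Jiao's Theorem~\ref{main111} gives $\vol(-K_{X_{\overline{\eta}}}) = \inf_{s \in S'}\vol(-K_{X_s})$, and this infimum can in principle be $0$ even though every term is positive, so bigness of the fibers over a Zariski-dense set does not formally propagate to the generic fiber. The extra structure you actually have on $S'$ is not merely bigness but the existence of a klt complement $\Delta(s)$ with $K_{X_s}+\Delta(s)\sim_{\Q}0$, and it is exactly this structure that the paper exploits via the bound $\Delta(s)\ge N(s)\ge N_s$; your "semicontinuity $+$ Kawamata--Viehweg vanishing $+$ Hilbert polynomial" sketch does not use this structure, and Kawamata--Viehweg vanishing does not directly apply because $-K_{X_s}$ is not assumed nef, so the Hilbert function of $\mathcal{O}(-mK_{X_s})$ is not the Euler characteristic. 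Relatedly, the stratification by analytic singularity type alone cannot give constancy of the volume: the Zariski decomposition of $-K_{X_s}$, and hence $\vol(-K_{X_s}) = P(s)^2$, depends on the global configuration of negative curves, not just local singularities, so a simultaneous log resolution over a singularity stratum does not yet control the volume. To salvage your plan you would first have to prove $P := P(\overline{\eta})$ is big (which is Step~4 of the paper's argument) and then show the Zariski decomposition spreads out with $P_s$ nef on a dense open --- at which point you have essentially reproduced the paper's direct proof and Theorem~\ref{imain1} is no longer needed.
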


Note that the proof of Theorem \ref{surface} is almost the same as that of \cite{GT16}*{Theorem 1.1}. As a corollary of Theorems \ref{imain1} and \ref{surface}, we can prove the following.

\begin{corollary} \label{coro1}
Let $\mathcal{S}$ be a bounded family of Fano type surfaces over $\C$. Then the set
$$ \{\mathrm{vol}(-K_X)\mid X\in \mathcal{S}\}$$
is a DCC set.
\end{corollary}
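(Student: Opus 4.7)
The plan is to argue by contradiction. Suppose $\{\mathrm{vol}(-K_X) : X \in \mathcal{S}\}$ is not DCC; then one can select a strictly decreasing infinite sequence $v_1 > v_2 > \cdots$ with $v_i = \mathrm{vol}(-K_{X_i})$ for some $X_i \in \mathcal{S}$. By the boundedness hypothesis, there is a projective morphism $V \to W$ between varieties over $\mathbb{C}$ with each $X_i$ isomorphic to a closed fiber $V_{s_i}$ at some $s_i \in W$. A standard noetherian argument---pass to the eventual closure of $\{s_i : i \geq N\}$, which stabilizes, and then to an irreducible component on which a subsequence remains Zariski-dense---lets me assume $W$ is irreducible and $\{s_i\}$ is Zariski-dense in $W$, with the corresponding subsequence of $v_i$ still strictly decreasing.

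Since the closed fibers $V_{s_i}$ are Fano type surfaces and $\{s_i\}$ is Zariski-dense, Theorem \ref{surface} provides that the geometric generic fiber $V_{\bar\eta}$ is of Fano type. Applying the implication (b)$\Rightarrow$(a) of Theorem \ref{imain1} with $\Delta = 0$, I extract a value $v^* > 0$ and a Zariski-dense subset $S^* \subseteq W$ such that $V_s$ is Fano type with $\mathrm{vol}(-K_{V_s}) = v^*$ for every closed point $s \in S^*$.

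The remainder of the argument proceeds by noetherian induction on $\dim W$. The base case $\dim W = 0$ is immediate since $W$ is a finite set. For the inductive step I would promote $S^*$ to a Zariski-open dense subset $U \subseteq W$ on which $\mathrm{vol}(-K_{V_s}) = v^*$; then $W \setminus U$ has strictly smaller dimension, and the inductive hypothesis yields finiteness (hence DCC) of the additional volumes occurring over $W \setminus U$. Combined with the single value $v^*$, this would make the total set of volumes occurring over $W$ finite, contradicting the existence of the infinite strictly decreasing sequence.

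The main obstacle is precisely this upgrade of the Zariski-dense subset $S^*$ to a Zariski-open dense $U$, which is not formally part of Theorem \ref{imain1}. In the surface case I expect it to follow from constructibility of the volume function: in a bounded family of Fano type surfaces the Cartier index of $-K_{V_s}$ is bounded, so after a flattening stratification $(-K_{V_s})^2$ is locally constant via the Hilbert polynomial of $-NK_{V/W}$; the $-K$-negative curves fit into a bounded family, so after further stratification the negative part $N_s$ of the Zariski decomposition $-K_{V_s} = P_s + N_s$ varies constructibly; and then $\mathrm{vol}(-K_{V_s}) = P_s^2 = (-K_{V_s})^2 - N_s^2$ is constructible on $W$, so constancy on the Zariski-dense $S^*$ forces constancy on the dense open stratum meeting $S^*$.
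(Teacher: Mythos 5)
Your overall strategy mirrors the paper's: reduce to a dense sequence of closed points in an irreducible base, invoke Theorem \ref{surface} for the geometric generic fiber, then apply (b)$\implies$(a) of Theorem \ref{imain1} with $\Delta=0$ to obtain a contradiction with the strictly decreasing volumes. But you skip a genuine and necessary reduction: both Theorem \ref{surface} (via Conjecture \ref{conj1}) and Theorem \ref{imain1} require $X\to S$ to be a morphism between \emph{normal} varieties, and Theorem \ref{imain1} requires $(X,0)$ to be a pair, i.e., $K_X$ to be $\Q$-Cartier. The total space of a bounded family has no such structure a priori, and you cannot apply those theorems without it. The paper handles this explicitly: one first arranges $X$ smooth in codimension one, then uses constructibility of the $(S_2)$ locus (\cite{EGAIV2}*{9.9.2 (viii)}) together with the fact that the fibers over the dense set $\{s_i\}$ are normal surfaces to conclude (after shrinking) that $X$ is normal, and then uses rational singularities of Fano type surfaces (\cite{KM98}*{Theorem 5.22}) plus \cite{dFEM11}*{Theorem B.1} to get $X$ $\Q$-Gorenstein. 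Without this, the remainder of your argument does not get off the ground.

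On your ``main obstacle'': you are right that the \emph{statement} of Theorem \ref{imain1}(b)$\implies$(a) only produces a Zariski-dense $S^*$, and two Zariski-dense subsets of $W$ need not meet in an infinite set. However, the paper's \emph{proof} of (b)$\implies$(a) in fact shows that, after shrinking $S$ and passing to a finite cover, the volume $\mathrm{vol}(-(K_{X_s}+\Delta_s))$ is constant for \emph{all} closed points $s$ in the shrunk base; its image in the original $S$ is a dense constructible set, hence contains a dense open $U$. Since $\{s_i\}$ is dense in $S$ and $U$ is open dense, infinitely many $s_i$ land in $U$, and that is the contradiction. No noetherian induction or constructibility of the volume function is needed. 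Your proposed constructibility route (bounded Cartier index, Hilbert polynomial stratification, constructible variation of the negative part of the Zariski decomposition) is plausible for surfaces but is only sketched; the negative part of the Zariski decomposition is not obviously constructible in families without an argument, and it is far cleaner to use the stronger conclusion already established in the proof of Theorem \ref{imain1} than to erect this extra machinery.
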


Note that we can view Corollary \ref{coro1} as a partial anti-canonical version of the DCC property of canonical volumes as in \cite{HMX14}*{Theorem 1.3}. The boundedness condition is crucial in the proof. See Example \ref{ex}.

\smallskip

Next, we see the reduction mod $p$ side. In many aspects, the situations in reduction mod $p$ are parallel to the situation in characteristic $0$ side. For example, let $X$ be a $\Q$-Gorenstein variety over $\C$. If there is a finitely generated $\Z$-algebra $R$ such that $X_s$ is strongly $F$-regular for any $s\in \Spec R$, then $X$ is klt (cf. \cite{HW02}*{Theorem}). It can be viewed as a reduction mod $p$ counterpart of the fact that for a flat family $X\to S$, if $X_s$ is klt for any closed point $s\in S$, then the geometric generic point $X_{\overline{\eta}}$ is klt (cf. \cite{Laz04b}*{Theorem 9.5.16}) (for the definition of strong $F$-regularity, see Definition \ref{asdfaasf}).

\smallskip

Schwede and Smith proposed the following conjecture in \cite{SS10}:

\begin{conjecture}\label{conj2}
Let $X$ be a globally $F$-regular type variety over $\C$. Then $X$ is of Fano type.
\end{conjecture}

For the definition of the notion of globally $F$-regular type variety, refer to Definition \ref{type}. Note that global $F$-regularity can be considered as a ``right'' definition of the notion of Fano type over positive characteristic fields, and therefore we may view Conjecture \ref{conj2} as a reduction mod $p$ version of Conjecture \ref{conj1} (for the definition of global $F$-regularity, see Definition \ref{asdfaasf}). The conjecture can be used for Fano type results in many moduli spaces (cf. \cite{SZ20} and \cite{WW24}).

\smallskip

Conjecture \ref{conj2} is proved in the surface case in \cite{GT16}, \cite{HP15} and \cite{Oka17}. Moreover, the conjecture holds if $X$ is a Mori dream space by \cite{GOST15}. Furthermore, it is proved that if $X$ is a smooth globally $F$-regular type threefold with nef anti-canonical divisor, then the conjecture is true (\cite{CKT24}). As a parallel result of Theorem \ref{imain1}, we prove the following result.

\begin{theorem}\label{imain2}
Suppose $(X,\Delta)$ is a projective pair over $\C$. Let $R$ be a finitely generated $\Z$-algebra and $(X_R,\Delta_R)$ a model of $(X,\Delta)$ over $R$. Then the following are equivalent:

\begin{itemize}
    \item[\emph{(a)}] There exists a Zariski-dense subset $S'\subseteq \Spec R$ and a positive number $v>0$ satisfying the following conditions. For any closed point $s\in S'$:
\begin{itemize}
    \item[\emph{(1)}] The volume is given by $\mathrm{vol}(-(K_{X_{\overline{s}}}+\Delta_{\overline{s}}))=v$.
    \item[\emph{(2)}] The fiber $(X_{\overline{s}},\Delta_{\overline{s}})$ is globally $F$-regular.
    \end{itemize}
    \item[\emph{(b)}] The pair $(X,\Delta)$ is of Fano type.
\end{itemize}
(For the definition of the notion of ``model", see Definition \ref{model})
\end{theorem}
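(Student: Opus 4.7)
The two directions are of very different character. The direction \emph{(b) $\Rightarrow$ (a)} is the standard spreading-out direction. If $(X,\Delta)$ is of Fano type, pick an auxiliary $\Q$-effective $\Delta''\ge \Delta$ with $(X,\Delta'')$ klt and $-(K_X+\Delta'')$ ample; spreading $\Delta''$ over $R$ and invoking the classical Smith--Hara type theorem that klt log Fano pairs in characteristic zero have globally $F$-regular reduction, we obtain $(X_s,\Delta''_s)$ globally $F$-regular on a dense open $U\subseteq \Spec R$. Since global $F$-regularity is preserved under decreasing the boundary, $(X_s,\Delta_s)$ is globally $F$-regular on $U$ as well, giving (2). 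For (1), bigness of $-(K_{X_R/R}+\Delta_R)$ on the generic fiber together with flat base change and generic freeness makes the volume locally constant on a dense open, which we may intersect with $U$ to form $S'$.

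The interesting direction is \emph{(a) $\Rightarrow$ (b)}. I would proceed in three steps. First, since global $F$-regularity of $(X_s,\Delta_s)$ is in particular strong $F$-regularity, and strong $F$-regularity is an open condition in spread-out families, the Zariski density of $S'$ upgrades (2) to strong $F$-regularity on a dense open subset of $\Spec R$; by the Hara--Watanabe characterization of klt singularities via reduction mod $p$, the pair $(X,\Delta)$ is klt. Second, the volume hypothesis (1) combined with the lower semicontinuity of volumes in families (equivalently, openness of the bigness locus on the base) and the Zariski density of $S'$ forces $-(K_X+\Delta)$ to be big on the geometric generic fiber, hence on $X$.

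The third and delicate step is to upgrade ``klt with big anti-log canonical'' to the existence of a boundary witnessing Fano type. Here I would exploit the Schwede--Smith correspondence in positive characteristic: global $F$-regularity of the pair $(X_s,\Delta_s)$ yields a $\Q$-effective $\Delta'_s\ge \Delta_s$ with $-(K_{X_s}+\Delta'_s)$ ample and $(X_s,\Delta'_s)$ still strongly $F$-regular. The inequality $\Delta'_s\ge \Delta_s$ combined with (1) gives $\mathrm{vol}(-(K_{X_s}+\Delta'_s))\le v$, so the log Fano pairs $(X_s,\Delta'_s)$ form a bounded-volume family. Combining this bounded-volume family with klt control inherited from the geometric generic fiber and a Hilbert-scheme / BAB-type boundedness argument places the $(X_s,\Delta'_s)$ into a bounded family; taking a flat limit then produces a $\Q$-effective $\Delta'\ge \Delta$ on $X$ such that $(X,\Delta')$ is klt and $-(K_X+\Delta')$ is ample, i.e.\ $(X,\Delta)$ is of Fano type. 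I expect this last boundedness and limiting step to be the main obstacle: without the constancy of volumes one cannot bound the $\Delta'_s$, and the problem reduces to the open Conjecture \ref{conj2}. It is precisely here that hypothesis (1) is used, paralleling its role in the proof of Theorem \ref{imain1}.
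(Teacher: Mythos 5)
Your proposal has two genuine gaps, one in each direction.

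For (a)$\implies$(b), your third step fails. You propose to take the complements $\Delta'_s$ furnished by the Schwede--Smith theorem, invoke a ``Hilbert-scheme / BAB-type'' boundedness statement using that $\mathrm{vol}(-(K_{X_s}+\Delta'_s))\le v$, and pass to a flat limit. But Birkar's boundedness results require control on the singularities of the pair --- an $\epsilon$-lc condition, or a lower bound on log discrepancies, or a DCC condition on coefficients --- and the Schwede--Smith theorem gives no such control whatsoever on $\Delta'_s$. A sequence of klt log Fano pairs with bounded volume but coefficients of $\Delta'_s$ accumulating at $1$ need not lie in any bounded family, and no flat-limit argument can extract a characteristic-zero complement from it. In fact the inability to bound $\Delta'_s$ is precisely what makes Conjecture \ref{conj2} hard, and the volume hypothesis (1) alone does not repair it. The missing idea is the criterion of Lemma \ref{Fano type} (from \cite{CJK23}): one does not try to construct a single global complement, but instead verifies, for each prime divisor $E$ over $X$, the inequality $A_{X,\Delta}(E)-\sigma_E(-(K_X+\Delta))>0$. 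This converts the problem into a per-divisor estimate. Proposition \ref{disc 2} matches log discrepancies across the reduction, Lemma \ref{Jiao} lets the constancy of volumes (via Theorem \ref{main222}) force $\sigma_{E_{\overline s}}(D_{\overline s})\ge \sigma_E(D)$, and Theorem \ref{러끼비끼} supplies the strict positivity $A_{X_{\overline s},\Delta_{\overline s}}(E_{\overline s})-\sigma_{E_{\overline s}}(D_{\overline s})>0$ on the fiber. This is exactly where hypothesis (1) enters, and it sidesteps any boundedness input.

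For (b)$\implies$(a), your argument for (1) is too quick. Generic freeness gives, for each fixed $m$, a dense open $U_m\subseteq\Spec R$ where $h^0(X_{\overline s},\mathcal{O}(-m(K+\Delta)_{\overline s}))$ is constant, but $\bigcap_m U_m$ is only a countable intersection and need not be open or dense, so the volume (a $\limsup$ over $m$) is not controlled. The paper's proof handles this by running a $-(K_X+\Delta)$-MMP, passing to the associated semiample fibration $g:X_n\to Y$ with $g^*H=-(K_{X_n}+\Delta_n)$, spreading out, and using that $Y_{\overline s}$ is globally $F$-regular (via \cite{SS10}*{Proposition 6.3}), whence \cite{Smi00}*{Corollary 4.3} yields $H^1(Y_{\overline s},\mathcal{O}(mnH_{\overline s}))=0$ for all $m$; cohomology and base change then makes $h^0$ equal to a rank that is constant in $s$, giving constancy of the volume uniformly in $m$. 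Without the MMP and the vanishing theorem your version of this step does not close.
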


Let us explain two important technical steps in proving both Theorem \ref{imain1} and \ref{imain2}. First, we will use the log canonical threshold criterion for Fano type property (cf. Lemma \ref{Fano type}). For computing that, a priori, we must take into account all log resolutions of a pair. To address the problem, Xu's result on computing quasi-monomial valuations as documented in \cite{Xu20} is employed. Following this, a single log resolution suffices for our considerations. Second, we should compare log canonical thresholds, and thus Nakayama's asymptotic orders on both closed and generic fibers. To address the issue, we need to use Jiao's result on volumes of a family of big divisors (\cite{Jia25}). 

\smallskip

Let us note that the idea starts in \cite{BLX22} to prove the openness of $K$-semistability, while the part involving the comparison of log canonical thresholds in \cite{BLX22} is attributed to \cite{HMX13}.

\smallskip

This paper is organized as follows: In Section \ref{22}, we collect basic definitions, lemmas, and properties. In Section \ref{333}, we collect basic definitions and theorems in positive characteristic algebraic geometry. In Section \ref{444}, we introduce Jiao's result \cite{Jia25} and prove an analog of the result. Finally, in Section \ref{F}, we prove the main theorems.

\section{Preliminaries} \label{22}
In this paper, \emph{variety} means reduced, irreducible, separated, and finite type scheme over an algebraically closed field $k$. Any $\Q$-divisor is $\Q$-Weil unless otherwise stated. Let us collect basic notions and definitions which will be used in the paper.

\begin{itemize}
\item Given a variety $X$ and an effective $\Q$-divisor $\Delta$ on it, we say that $(X,\Delta)$ is a \emph{couple}. A couple $(X,\Delta)$ is a \emph{pair} when $K_X+\Delta$ is $\Q$-Cartier.
\item Consider $f:X'\to X$ as a birational morphism. The set where $f$ is not an isomorphism on $X'$ is denoted by $\mathrm{Exc}(f)$.
    \item Consider $X$ as a variety. A morphism $f: X'\to X$ is a \emph{resolution} if it is a proper birational morphism and $X'$ is smooth. For a pair $(X, \Delta)$, the morphism $f:X'\to X$ is a \emph{log resolution} of $(X,\Delta)$ if $f$ is a proper birational morphism, and the union $\mathrm{Exc}(f) \cup \mathrm{Supp}\, \Delta$ forms a simple normal crossing (snc) divisor on $X'$.
    \item For a variety $X$ and a Cartier divisor $D$ on $X$, let us denote by
    $$ |D|:=\{D'\sim D\mid D'\text{ is an effective Cartier divisor on }X\},$$
    and
    $$ |D|_{\Q}:=\{D'\sim_{\Q} D\mid D'\text{ is an effective }\Q\text{-Cartier }\Q\text{-divisor on }X\}.$$
    We denote by $\mathfrak{b}(|D|)$ the base locus of $|D|$.
    \item Let $S$ be a scheme, and let $s\in S$ be a point. We denote by $k(s)$ the residue field of $\mathcal{O}_{S,s}$. We may regard $s$ as a morphism $s:\Spec k(s)\to S$. Moreover, we denote by $\overline{s}$ the composition of $\Spec \overline{k(s)}\to \Spec k(s)$ and $\Spec k(s)\to S$. For an integral scheme $X$, we denote by $k(X)$ the function field of $X$.
    \item Let $X\to S$ be a morphism of schemes, and let $s\in S$ be a (not necessarily closed) point. Denote by $X_s$ the fiber product $X\times_S s$, and by $X_{\overline{s}}$ the fiber product $X\times_S \overline{s}$. We say that $X_{\overline{s}}$ is a \emph{geometric fiber} of $X\to S$.
    \item Let $(X,\Delta)$ be a pair, and let $E$ be a prime divisor over $X$. Suppose $f:X'\to X$ is a proper birational morphism from a normal variety $X'$ that has $E$ as an effective $f$-exceptional divisor. We denote by \emph{log discrepancy} $A_{X,\Delta}(E)$ of $(X,\Delta)$ along $E$, and the definition is
    $$ A_{X,\Delta}(E):=\mathrm{mult}_E(K_{X'}-f^*(K_X+\Delta))+1.$$
    Note that the definition does not depend on the choice of $f$.
\end{itemize}

The \emph{volume} of a divisor plays an important role in the paper.

\begin{definition}
Let $X$ be a proper finite type scheme over an algebraically closed field $k$, and let $D$ be a $\Q$-Weil divisor on $X$. The \emph{volume} of $D$ is defined by
$$ \mathrm{vol}(D):=\limsup_{m\to \infty}\frac{\dim_k H^0(X,\mathcal{O}(mnD))}{\frac{(mn)^{\dim X}}{(\dim X)!}},$$
where $n$ is a positive integer such that $nD$ is Weil.
\end{definition}

It is well-known that $\mathrm{vol}(D)<\infty$, and $\mathrm{vol}(D)>0$ if and only if $D$ is big. For more details, see \cite{Laz04a}*{Chapter 2.2} (in Cartier case) and \cite{FKL16} (in Weil case).

\subsection{Models}

Let us define the notion of \emph{models}.

\begin{definition} \label{model}
Consider that $X$ is a variety over $\C$.
\begin{itemize}
    \item[(a)] Let $Z_i\subseteq X$ be closed subschemes. Let $R$ be a finitely generated $\Z$-algebra with a flat $R$-scheme $X_R$ and closed subschemes $Z_{iR}\subseteq X_R$ that are flat over $R$, and $(Z_{iR}\subseteq X_R)\times_R \C=Z_i\subseteq X$ for each $i$. In this case, the tuple $(X_R,Z_{1R},\cdots,Z_{nR})$ is called a \emph{model} of $(X,Z_1,\cdots,Z_n)$ over $R$.
    \item[(b)] Let $D$ be an effective Weil divisor on $X$. We say that a flat $R$-scheme $D_R$ is a \emph{model} of $D$ over $R$ if $(X_R,D_R)$ is a model of $(X,D)$ over $R$.
    \item[(c)] Let $\left(X,\Delta:=\sum^r_{i=1}r_i\Delta_i\right)$ be a couple, where $r_i>0$ and $\Delta_i$ are prime divisors. We say $\left(X_R,\Delta_R:=\sum^r_{i=1}r_i\Delta_{iR}\right)$ is a \emph{model} of $(X,\Delta)$ over $R$ if $(X_R,\Delta_{1R},\cdots,\Delta_{rR})$ is a model of $(X,\Delta_1,\cdots,\Delta_r)$ over $R$.
    \item[(d)] Let $f:X'\to X$ be a morphism between varieties, $f_R:X'_R\to X_R$ morphism between flat $R$-schemes, and $f_R\times_R \C=f$. Then we say $f_R$ is a \emph{model} of $f$ over $R$.
    \item[(e)] Let $f:X\dashrightarrow X_0$ be a birational map with defining loci $U\subseteq X$, $U_0\subseteq X_0$, and the defining birational morphism $f_U:U\to U_0$. Let $X_R,X_{0R}$ and $f_{UR}:U_R\to U_{0R}$ be models of $X,X_0$ and $f_U$ over $R$. If $f_{UR}$ is birational and the complements of $U_R\subseteq X_R$ and $U_{0R}\subseteq X_{0R}$ are codimension $\le 1$, we say that $f_R:X_R\dashrightarrow X_{0R}$ is a \emph{model} of $f$ over $R$.
\end{itemize}
\end{definition}

\begin{remark} \label{remark}
Let $X\to S$ be a morphism between varieties, and let $f:X'\to X_{\overline{\eta}}$ be any morphism between varieties over $\overline{k(\eta)}$. Then $X'$ is determined by finitely many equations in $\overline{k(\eta)}$, and let $s_1=0,\cdots,s_r=0$ be the equations. There exists a finite extension $K'/k(\eta)$ such that all coefficients of $s_1,\cdots,s_r$ are in $K'$, and thus there exists a quasi-finite morphism $S'\to S$ such that all coefficients of $s_1,\cdots,s_r$ are in $\mathcal{O}_{S'}$. Therefore, there is an $S$-scheme $X'_S$ such that $X'_S\times_S \overline{\eta}=X'$. Moreover, it induces a morphism $f_S:X'_S\to X$ such that $f_S\times_S \overline{\eta}=f$.

\smallskip

Let $X$ be a variety over $\C$, and let $s_1=0,\cdots,s_r=0$ be the equations which determine $X$. Let $R$ be the finitely generated $\Z$-algebra generated by coefficients of $s_1,\cdots,s_r$. Then there exists a model $X_R$ of $X$ over $R$. Moreover, for any morphism $f:X'\to X$, there exists a finitely generated $\Z$-algebra $R$ and a model $f_R$ of $f$ over $R$.
\end{remark}

For a variety $S$, by ``shrinking $S$" we mean replacing $S$ with a Zariski open subset, and for a finitely generated $\Z$-algebra $R$, by ``shrinking $R$" we mean replacing $R$ with a finitely generated $R$-algebra.

\smallskip

\begin{lemma} \label{intersection}
Let $X$ be a projective variety over $\C$, $Z\subseteq X$ a closed subscheme of dimension $d$, $D_1,\cdots,D_d$ $\Q$-Cartier $\Q$-Weil divisors on $X$, and let $R$ be a finitely generated $\Z$-algebra. Given any model $(X_R,Z_R,D_{1R},\cdots,D_{dR})$ of $(X,Z,D_1,\cdots,D_d)$ over $R$, we have
$$ D_1 \cdots D_d\cdot Z=D_{1\overline{s}}\cdots D_{d\overline{s}}\cdot Z_{\overline{s}}$$
for a general closed point $s\in \Spec R$.
\end{lemma}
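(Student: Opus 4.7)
The strategy is to realize both sides of the asserted equality as the $n_1\cdots n_d$-coefficient of a single Euler characteristic polynomial, and then exploit the constancy of Euler characteristics in flat proper families. First, choose $N\in \Z_{>0}$ such that each $N D_i$ is Cartier; both sides of the claimed equality scale by $N^{-d}$ upon replacing $D_i$ by $N D_i$, so I may assume from the outset that every $D_i$ is Cartier with associated line bundle $L_i:=\mathcal{O}_X(D_i)$, and similarly for the model.

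The central input is the Snapper--Kleiman lemma: for any proper scheme $Y$ of dimension $\leq d$ over a field and any line bundles $M_1,\ldots,M_d$ on $Y$, the function
$$ (m_1,\ldots,m_d)\;\longmapsto\;\chi\!\left(Y,\textstyle\bigotimes_{i} M_i^{\otimes m_i}\right) $$
is a polynomial in $\Q[m_1,\ldots,m_d]$ of total degree at most $d$, and the coefficient of $m_1\cdots m_d$ equals the intersection number $(M_1\cdots M_d)_Y$. After shrinking $R$ via generic flatness, I may assume $X_R\to \Spec R$ and $Z_R\to \Spec R$ are flat and projective, $\Spec R$ is irreducible, and each $L_{iR}:=\mathcal{O}_{X_R}(D_{iR})$ is an honest line bundle on $X_R$. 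For any fixed tuple $(n_1,\ldots,n_d)\in\Z^d$, the line bundle $\bigotimes_i L_{iR}^{\otimes n_i}|_{Z_R}$ is flat over $R$, so by the standard theorem on the constancy of Euler characteristics in flat proper families the function $s \mapsto \chi(Z_s,\bigotimes_i L_{is}^{\otimes n_i})$ is locally constant on $\Spec R$, and hence constant after shrinking.

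Since the Snapper polynomial has bounded total degree, its (finitely many) coefficients are determined by finitely many of its values, so applying the previous step to each of those tuples and shrinking $R$ once more, the Snapper polynomial $P_s(n_1,\ldots,n_d)$ is itself independent of $s\in \Spec R$; in particular its $n_1\cdots n_d$-coefficient, which is $D_{1s}\cdots D_{ds}\cdot Z_s$, is constant on $\Spec R$. Finally, the tuple $(X,Z,D_i)$ is obtained from $(X_R,Z_R,D_{iR})$ by the flat base change $R\hookrightarrow \C$, and passage from a closed $s$ to $\overline{s}$ is another flat field extension; cohomology groups under flat base change of fields simply tensor with the target field, so Euler characteristics, and therefore intersection numbers via Snapper, are preserved on both sides. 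Combining these two comparisons with the constancy of $D_{1s}\cdots D_{ds}\cdot Z_s$ over $\Spec R$ yields
$$ D_1 \cdots D_d\cdot Z \;=\; D_{1\overline{s}}\cdots D_{d\overline{s}}\cdot Z_{\overline{s}} $$
for every closed $s$ in a Zariski open subset of $\Spec R$.

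\emph{Main difficulty.} There is no genuinely hard step: the argument is a routine package of generic flatness, Snapper--Kleiman, and flat base change. The only point requiring care is to shrink $R$ simultaneously for the finitely many tuples $(n_1,\ldots,n_d)$ pinning down the Snapper polynomial, which is harmless because the degree bound $\leq d$ makes this a finite condition.
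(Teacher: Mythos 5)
Your argument is correct and is in substance the same as the paper's: the paper's proof simply delegates the comparison between the characteristic-zero intersection number and the intersection number at a general closed fiber to a citation of Cascini--Kawakami--Takagi (their Lemma 2.2), and then handles the remaining step $D_{1s}\cdots D_{ds}\cdot Z_s = D_{1\overline{s}}\cdots D_{d\overline{s}}\cdot Z_{\overline{s}}$ by the Snapper--Kleiman definition of intersection numbers (Debarre, Definition 1.17) together with flat base change of cohomology. Your write-up unpacks the first citation into the same ingredients --- generic flatness, local constancy of Euler characteristics in flat proper families, the degree bound on the Snapper polynomial to reduce to finitely many Euler characteristics, and flat base change through $R\hookrightarrow \operatorname{Frac}(R)\hookrightarrow\C$ --- so nothing is gained or lost beyond making the argument self-contained rather than cited.
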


\begin{proof}
If we consider \cite{CKT24}*{Lemma 2.2}, the only remaining part is to show
$$ D_{1s}\cdots D_{ds}\cdot Z_{s}=D_{1\overline{s}}\cdots D_{d\overline{s}}\cdot Z_{\overline{s}}. $$
This equation results from \cite{FGAexplained}*{Definition B.8} alongside the flat base change theorem (see \cite{Stacks}*{Lemma 02KH}).
\end{proof}



\subsection{Log discrepancies in a family}
The subsection is devoted to the proof of the following two propositions. Let $k$ be an algebraically closed field of characteristic $0$.

\begin{proposition} \label{disc 1}
Let $(X,\Delta)$ be a pair over $k$, $f:X\to S$ a proper morphism between two varieties over $k$, $X'\to X$ a proper birational morphism, and let $E$ be a prime divisor on $X'$ which is flat and geometrically irreducible over $S$. Suppose $\eta\in S$ is the generic point. Then,
$$ A_{X_{\overline{\eta}},\Delta_{\overline{\eta}}}(E_{\overline{\eta}})=A_{X_s,\Delta_s}(E_s)$$
for a general closed point $s\in S$.
\end{proposition}

\begin{proof}
Note that
$$
\begin{aligned}
A_{X,\Delta}(E)&=\mathrm{mult}_E(f^*(K_X+\Delta)-K_{X'})+1
\\ &=\mathrm{mult}_{E_{\overline{\eta}}}(f^*_{\overline{\eta}}(K_{X_{\overline{\eta}}}+\Delta_{\overline{\eta}})-K_{X'_{\overline{\eta}}})+1
\\ &=A_{X_{\overline{\eta}},\Delta_{\overline{\eta}}}(E_{\overline{\eta}}),
\end{aligned}$$
and similarly, $A_{X,\Delta}(E)=A_{X_s,\Delta_s}(E_s)$ for a general $s\in S$.
\end{proof}

\begin{proposition} \label{disc 2}
Let $(X,\Delta)$ be a pair over $\C$, $f:X'\to X$ a proper birational morphism with $E$ a prime divisor on $X'$, $R$ a finitely generated $\Z$-algebra and $(X_R,\Delta_R)$ and $(X'_R,E_R)$ models of $(X,\Delta)$ and $(X',E)$ over $R$ respectively. Then after shrinking $R$,
$$ A_{X,\Delta}(E)=A_{X_{\overline{s}},\Delta_{\overline{s}}}(E_{\overline{s}})$$
for a general closed point $s\in \Spec R$.
\end{proposition}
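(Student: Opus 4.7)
The plan is to mirror the proof of Proposition \ref{disc 1} in the reduction mod $p$ setting, using Remark \ref{remark} to supply the necessary spreading-out over $\Spec R$ in place of restriction to fibers of $S$. First I would reduce to the case in which $f$ is itself a log resolution of $(X,\Delta)$ and $E$ is a prime snc divisor on a smooth variety. To do this, take a proper birational morphism $g:Y\to X'$ with $Y$ smooth such that the strict transform $\tilde E$ of $E$ remains a prime divisor and $f\circ g:Y\to X$ is a log resolution of $(X,\Delta)$. Since $\tilde E$ determines the same divisorial valuation as $E$, we have $A_{X,\Delta}(E)=A_{X,\Delta}(\tilde E)$, and the analogous equality will hold on geometric fibers after $\tilde E$ is spread out over $R$. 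I may therefore replace $(X',E,f)$ by $(Y,\tilde E,f\circ g)$ and assume $f$ is a log resolution.

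Next I would spread out the whole picture. By Remark \ref{remark}, after enlarging and then shrinking $R$, the morphism $f$, the divisor $E$, and all exceptional components of $f$ descend to an $R$-model $f_R:X'_R\to X_R$ together with $E_R\subset X'_R$. Openness of smoothness, openness of the snc condition, and generic flatness then allow me to shrink $R$ once more so that for every closed point $s\in\Spec R$ the geometric fiber $f_{\overline s}:X'_{\overline s}\to X_{\overline s}$ is still a log resolution of $(X_{\overline s},\Delta_{\overline s})$ with $E_{\overline s}$ a prime snc divisor on $X'_{\overline s}$, and $E_R\to\Spec R$ is flat with geometrically integral fibers.

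To conclude, I would compare multiplicities. Since $X'_R\to\Spec R$ is smooth after shrinking, $\omega_{X'_R/R}$ commutes with arbitrary base change, so the relative $\Q$-divisor
\[
D_R := K_{X'_R/R}-f_R^*(K_{X_R/R}+\Delta_R)
\]
restricts on the characteristic $0$ fiber to $K_{X'}-f^*(K_X+\Delta)$ and on each geometric fiber over $\overline s$ to $K_{X'_{\overline s}}-f_{\overline s}^*(K_{X_{\overline s}}+\Delta_{\overline s})$. The multiplicity of $D_R$ along $E_R$ is then a single rational number that equals both $\mult_E(D_R|_{X'})$ and $\mult_{E_{\overline s}}(D_R|_{X'_{\overline s}})$, yielding the desired equality $A_{X,\Delta}(E)=A_{X_{\overline s},\Delta_{\overline s}}(E_{\overline s})$.

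The main obstacle will be the coordination in the spreading-out step: arranging by a single shrinking of $R$ that smoothness of $X'_R\to\Spec R$, the log-resolution property of each geometric fiber, flatness of $E_R$, geometric integrality of $E_{\overline s}$, and base-change compatibility of $\omega_{X'_R/R}$ all hold simultaneously. Each of these is an open or generic condition, so the combination is standard, but it requires invoking openness of the relevant loci and generic flatness in concert.
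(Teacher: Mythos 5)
Your proposal is correct and follows essentially the same route as the paper: pass to a further resolution $Y\to X'$ (the paper calls it $f':X''\to X'$) so that the composite with $f$ is a log resolution of $(X,\Delta)$, spread the whole diagram out over $R$ via Remark~\ref{remark}, shrink $R$ so the fibers remain log resolutions, and compare multiplicities of the relative discrepancy divisor, exactly as in Proposition~\ref{disc 1}. The paper states this compression tersely (``As in Proposition~\ref{disc 1}''), while you spell out the intermediate reductions (replacing $E$ by its strict transform, compatibility of $\omega_{X'_R/R}$ with base change), but the underlying argument is identical.
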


\begin{proof}
Let $f':X''\to X'$ be a resolution such that $\mathrm{Exc}(f\circ f')\cup \Supp ((f\circ f')^{-1}_*\Delta)$ is snc. After shrinking $R$, we may assume that there exists a model $f'_R$ of $f'$ over $R$ (cf. Remark \ref{remark}). As in Proposition \ref{disc 1},
$$ A_{X,\Delta}(E)=A_{X_s,\Delta_s}(E_s)=A_{X_{\overline{s}},\Delta_{\overline{s}}}(E_{\overline{s}}).$$
\end{proof}

\subsection{Valuations}
In this subsection, we introduce the concept of quasi-monomial valuation for both the characteristic zero as well as those with positive characteristic. For further information, refer to \cite{JM12} and \cite{Can20}. Let $k$ denote an algebraically closed field of any characteristic.

\smallskip

Consider $X$ as a variety over $k$. A \emph{valuation} on $k(X)$ is defined as a function $\nu:k(X)\to \R$ with the property that,\begin{itemize}
    \item[(a)] $\nu(a)=0$ for all $a\in k$,
    \item[(b)] $\nu(fg)=\nu(f)+\nu(g)$ for all $f,g\in k(X)$, and
    \item[(c)] $\nu(f+g)\ge \min\{\nu(f),\nu(g)\}$.
\end{itemize} 
We set $\nu(0):=\infty$. Subsequently, we define $$\mathcal{O}_{\nu}:=\{f\in k(X)\mid\nu(f)\ge 0\}.$$ Furthermore, the \emph{center} of $\nu$ is characterized by a scheme-theoretic point $x=c_X(\nu)\in X$, which ensures the existence of a local inclusion $\mathcal{O}_{X,x}\hookrightarrow \mathcal{O}_{\nu}$ of local rings.

\smallskip

We define $$ \nu(\mathfrak{a}):=\min\left\{\nu(f)\mid f\in \mathfrak{a}\cdot \mathcal{O}_{X,c_X(\nu)}\right\}.$$ Using a valuation $\nu\in \mathrm{Val}_X$ and a nonzero ideal $\mathfrak{a}$ in $\mathcal{O}_X$, we can define a topology on $\mathrm{Val}_X$ as the weakest topology such that $\nu\mapsto \nu(\mathfrak{a})$ is continuous on $\mathrm{Val}_X$ for every nonzero ideal $\mathfrak{a}$ in $\mathcal{O}_X$.

\smallskip

We say $(X',E)$ is a \textit{log smooth model} over $X$ with respect to the resolution $f:X'\to X$ and snc divisor $E=\sum^r_{i=1}E_i$ on $X'$ if $f$ is an isomorphism outside $E$'s support. Let $\eta$ denote the generic point of an irreducible component of the intersection of some of the divisors $E_i$. Let $E_{i_1},\cdots,E_{i_k}$ be the prime divisors of $E$ such that the intersection is $\overline{\{\eta\}}$, and let $z_{i_1},\cdots,z_{i_k}$ represent local equations of $E_{i_1},\cdots,E_{i_k}$.

\smallskip

By the Cohen structure theorem, there exists an isomorphism $\widehat{\mathcal{O}_{X',\eta}}\cong k(\eta)[[z_{i_1},\cdots,z_{i_k}]]$. So, for any $s\in \mathcal{O}_{X',\eta}$, we can write $s$ as
$$ s=\sum_{(\beta_1,\cdots,\beta_k)\in \Z^k_{\ge 0}}a_{(\beta_1,\cdots,\beta_k)}z^{\beta_1}_{i_1}\cdots z^{\beta_k}_{i_k}.$$
for some $a_{(\beta_1,\cdots,\beta_k)}\in k$. For any $\alpha:=(\alpha_1,\cdots,\alpha_k)\in \R^k_{\ge 0}$, we define
$$
\nu_{(X',E),\alpha}(s):=\min\left\{\sum^k_{i=1}\alpha_i\beta_i\;\middle|\; a_{(\beta_1,\cdots,\beta_k)}\ne 0\right\}.
$$
Then it defines a valuation on $X$.

\smallskip

Moreover, we define
$$
A_{X,\Delta}(\nu_{(X',E),\alpha}):=\sum^k_{i=1}\alpha_iA_{X,\Delta}(E_i).
$$
If $(X'',E':=\sum^{r'}_{i=1}E'_i)$ is a log smooth model over $X$, $\eta'\in Z':=\bigcap^{r'}_{i=1}E'_i$ is the generic point of an irreducible component of $Z'$, $E'_{i'_1},\cdots,E'_{i'_{k'}}$ are prime divisors of $E'$ such that the intersection is $\overline{\{\eta'\}}$, and if $\alpha'\in \R^{k'}_{\ge 0}$ is a tuple such that
$$ \nu_{(X',E),\alpha}=\nu_{(X'',E'),\alpha'},$$
then $A_{X,\Delta}(\nu_{(X',E),\alpha})=A_{X,\Delta}(\nu_{(X'',E'),\alpha'})$ by \cite{JM12}*{Proposition 5.1} and \cite{Can20}*{Lemma 4.3}.

\smallskip

In this paragraph, we assume the base field of $X$ is characteristic $0$. For a log smooth model $\left(X,\sum^r_{i=1}E_i\right)$ with $\bigcap^r_{i=1} E_i$ integral, we define a \emph{restriction map} $\rho_{(X',E)}:\mathrm{Val}_X\to \mathrm{QM}(X',E)$ by
$$ \rho_{(X',E)}(\nu):=\nu_{(X',E),(\nu(E_1),\cdots,\nu(E_r))},$$
and define
$$ A_{X,\Delta}(\nu):=\sup_{(X',E)\text{ log smooth model}}A_{X,\Delta}(\rho_{(X',E)}(\nu)).$$

\smallskip

We define
$$ \mathrm{QM}_{\eta}(X',E):=\left\{\nu_{(X',E),\alpha}\;\middle|\;\alpha\in \R^k_{\ge 0}\right\},$$
and
$$ \mathrm{QM}(X',E):=\bigcup_{\eta}\mathrm{QM}_{\eta}(X',E)\subseteq \mathrm{Val}_X,$$
where $\eta$ runs through the generic points of irreducible components of $\bigcap^r_{i=1} E_i$.

\smallskip

Let $\mathfrak{a}_{\bullet}=\left\{\mathfrak{a}_m\right\}_{m\ge \Z_{\ge 0}}$ be a graded sequence of ideals in $\mathcal{O}_X$. Define
$$ \nu(\mathfrak{a}_{\bullet}):=\inf_{m\ge 1}\frac{\nu(\mathfrak{a}_m)}{m}.$$

\begin{lemma}[{cf. \cite{JM12}*{Lemma 6.1 and Corollary 6.4}}] \label{yae'}
Let $X$ be a variety over $k$, and let $\mathfrak{a}_{\bullet}$ be a graded sequence of ideals in $\mathcal{O}_X$. Then the function $\nu\mapsto \nu(\mathfrak{a}_{\bullet})$ is upper-semicontinuous on $\mathrm{Val}_X$. Moreover, if $k$ is characteristic $0$, then the function is continuous on $\mathrm{Val}_X\cap \{A_{X,\Delta}(\nu)<\infty\}$.
\end{lemma}

\begin{proof}
Note that for any positive integer $m$, the function $\nu\mapsto \frac{\nu(\mathfrak{a}_m)}{m}$ is continuous on $\mathrm{Val}_X$. Taking $\mathrm{inf}_{m\ge 1}$ preserves upper-semicontinuity, and we obtain the first assertion. For the second assertion, we may take a log resolution $f:X'\to X$ of $(X,\Delta)$, and use \cite{JM12}*{Proposition 6.4}.
\end{proof}

The notion of \emph{log canonical threshold} plays an important role in the theory of valuation.

\begin{definition}
Let $(X,\Delta)$ be a pair over $k$, and let $\mathfrak{a}_{\bullet}$ be a graded sequence of ideals in $\mathcal{O}_X$. We define
$$ \mathrm{lct}(X,\Delta,\mathfrak{a}_{\bullet}):=\inf_{\nu\in \mathrm{Val}^*_X}\frac{A_{X,\Delta}(\nu)}{\nu(\mathfrak{a}_{\bullet})},$$
where $\nu$ runs through the divisorial valuations over $X$ on both definitions.
\end{definition}

\begin{definition}
Let $(X,\Delta)$ be a pair over $k$, and let $\mathfrak{a}_{\bullet}$ be a graded sequence of ideals in $\mathcal{O}_X$. We say that $\nu_0\in \mathrm{Val}^*_X$ \emph{computes} $\mathrm{lct}(X,\Delta,\mathfrak{a}_{\bullet})$ if
$$ \mathrm{lct}(X,\Delta,\mathfrak{a}_{\bullet})=\frac{A_{X,\Delta}(\nu_0)}{\nu_0(\mathfrak{a}_{\bullet})}.$$
\end{definition}

Conjectures are presented in \cite{JM12}*{Conjecture 7.4}, and a weak version is partially proved by Xu in \cite{Xu20}.

\begin{theorem}[{cf. \cite{Xu20}*{Theorem 1.1}}] \label{Xu}
Let $(X,\Delta)$ be a klt pair over $k$ of characteristic $0$, and let $\mathfrak{a}_{\bullet}$ be a graded sequence of ideals in $\mathcal{O}_X$. Then there is a quasi-monomial valuation $\nu_0\in \mathrm{Val}^*_X$ computing $\mathrm{lct}(X,\Delta,\mathfrak{a}_{\bullet})$.
\end{theorem}

\subsection{Nakayama's asymptotic order}
In this subsection, we aim to define Nakayama's \emph{asymptotic order}. For more details, see \cite{Nak04}*{Chapter 3} and \cite{FKL16}.

\smallskip

Let $X$ be a normal projective variety over an algebraically closed field $k$ with arbitrary characteristic, and $\nu\in \mathrm{Val}_X$ be a valuation. Suppose $D$ is a big $\Q$-Cartier $\Q$-divisor on $X$, then define $$ \nu(|D|):=\inf\left\{\nu(D')\mid D'\in |D|\right\},$$ and the \emph{asymptotic order} $$ \sigma_{\nu}(D):=\inf\left\{\nu(D')\mid D'\in |D|_{\Q}\right\}.$$
Note that for any proper birational morphism $f:X'\to X$ from a normal variety $X'$, there is a one-to-one correspondence $D'\mapsto f^*D'$ between $|D|_{\Q}$ and $|f^*D|_{\Q}$, and $$\nu(D')=\nu(f^*D')$$ for any $D'\in |D|_{\Q}$. Thus, we have $\sigma_{\nu}(D)=\sigma_{\nu}(f^*D)$. If $\nu=\mathrm{ord}_E$ for some prime divisor $E$ over $X$, denote it by $\sigma_E(D):=\sigma_{\mathrm{ord}_E}(D)$.

\smallskip

Let us prove a lemma that is instrumental in the proofs of the main theorems.

\begin{lemma}[{cf. \cite{FKL16}*{Proposition 2.1}}] \label{volume asymptotic order}
Let $X$ be a normal projective variety over $k$, $D$ a big $\Q$-Cartier $\Q$-divisor, and let $E$ be a prime divisor on $X$. Suppose $a\ge 0$ is a rational number. Then $a>\sigma_E(D)$ if and only if
$$ \mathrm{vol}(D-aE)<\mathrm{vol}(D).$$
\end{lemma}

\begin{proof}
Let us prove the ``if" side. Let us assume $a\le \sigma_E(D)$. Then for any $m$ such that $am$ is an integer, $am\le \mathrm{ord}_E(|mD|)$, and thus the map
$$ H^0(X,\mathcal{O}_X(mD-amE))\hookrightarrow H^0(X,\mathcal{O}_{X}(mD))$$
is an isomorphism. This gives the proof of the ``if" side. The proof of the ``only if" side is the same as \cite{FKL16}*{Proposition 2.1}.
\end{proof}

\begin{remark}
Lemma \ref{volume asymptotic order} holds even if $E$ is not $\Q$-Cartier.
\end{remark}

Let us prove the following corollary of Lemma \ref{yae'}.

\begin{corollary} \label{yae}
Let $X$ be a normal projective variety over $k$, and let $D$ be a big Cartier divisor on $X$. Then the function $\nu\mapsto \sigma_{\nu}(D)$ is upper-semicontinuous on $\mathrm{Val}_X$. Moreover, if $k$ is characteristic $0$, then the function is continuous on $\mathrm{Val}_X\cap \{A_{X,\Delta}(\nu)<\infty\}$.
\end{corollary}

\begin{proof}
Let $\mathfrak{a}_m:=\mathfrak{b}(|mnD|)$, where $n$ is a positive integer such that $H^0(X,nD)\ne 0$. Then $\mathfrak{a}_{\bullet}:=\left\{\mathfrak{a}_{m}\right\}_{m\in \Z_{\ge 0}}$ forms a graded sequence of ideals in $\mathcal{O}_X$, and $\sigma_{\nu}(D)=\frac{1}{n}\nu(\mathfrak{a}_{\bullet})$ for any valuation $\nu\in \mathrm{Val}_X$. Therefore, Lemma \ref{yae'} proves the assertion.
\end{proof}

Let us define the notion of \emph{log canonical threshold} for a projective klt pair $(X,\Delta)$ and a big $\Q$-Cartier divisor $D$ on $X$.

\begin{definition}
Let $(X,\Delta)$ be a projective klt pair over a field of characteristic $0$, and let $D$ be a big $\Q$-Cartier $\Q$-divisor on $X$. We define
$$ \mathrm{lct}_{\sigma}(X,\Delta,D):=\inf_{\nu\in \mathrm{Val}^*_X}\frac{A_{X,\Delta}(\nu)}{\sigma_{\nu}(D)}.$$
Let us say that a valuation $\nu_0\in \mathrm{Val}^*_X$ \emph{computes} $\mathrm{lct}_{\sigma}(X,\Delta,D)$ if
$$ \mathrm{lct}_{\sigma}(X,\Delta,D)=\frac{A_{X,\Delta}(\nu_0)}{\sigma_{\nu_0}(D)}.$$
\end{definition}

\begin{corollary} \label{Xuapp}
Let $(X,\Delta)$ be a projective klt pair over a field of characteristic $0$, and let $D$ be a big $\Q$-Cartier $\Q$-divisor on $X$. Then there is a quasi-monomial valuation $\nu_0\in \mathrm{Val}^*_X$ computing $\mathrm{lct}_{\sigma}(X,\Delta,D)$.
\end{corollary}

\begin{proof}
Let $n$ be a positive integer such that $H^0(X,nD)\ne 0$, and let $\mathfrak{a}_m:=\mathfrak{b}(|mnD|)$ for any positive integer $m$. Then $\mathfrak{a}_{\bullet}:=\{\mathfrak{a}_m\}_{m\in \Z_{\ge 0}}$ forms a graded sequence of ideals in $\mathcal{O}_X$, and $\sigma_{\nu}(D)=\frac{1}{n}\sigma_{\nu}(\mathfrak{a}_{\bullet})$ for any valuation $\nu\in \mathrm{Val}_X$. Now, apply Theorem \ref{Xu}.
\end{proof}

\subsection{Weighted blow-up} \label{weighted}
In this subsection, we define \emph{weighted blow-up}.

\smallskip

Let $\left(X',E=\sum^r_{i=1}E_i\right)$ be a couple of a smooth scheme $X'$ and an snc divisor $E\subseteq X'$ with an affine chart $\left(U_j:=\Spec A_j,\sum^r_{i=1}\mathrm{div}\,z_{ij}\right)$, and let $(\alpha_1,\cdots,\alpha_r)$ be a set of positive integers. Define by $I_{c,j}$ ($c$ is a positive integer) the ideal in $A_j$ generated by monomials $z^{c_1}_{1j}\cdots z^{c_r}_{rj}$ with
$$ \sum_i c_i\alpha_i\ge c.$$
Then we can define the blowup $\mathrm{Proj}\left(\bigoplus_{c\ge 1} I_{c,j}\right)\to U_j$ along $I_{c,j}$, and patching those gives us the \emph{weighted blow-up} $X_{(\alpha_1,\cdots,\alpha_r)}\to X'$. Let us denote by $\mathcal{I}_{(X',E),\alpha,c}$ the ideal sheaf in $\mathcal{O}_{X'}$ that is given by the patching of $I_{c,j}$.

\smallskip

Let us prove the following lemma, which tells us that any weighted blow-up is stable under taking closed fiber.

\begin{lemma}[{cf. \cite{BLX22}*{Proposition 4.1}}] \label{hel}
Let $\left(X',E=\sum^r_{i=1}E_i\right)$ be a couple of a regular scheme $X'$ and an snc divisor $E\subseteq X'$, $X'\to S$ a smooth morphism with $S$ regular such that each $\bigcap_{i\in I'} E_i$ (where $I'\subseteq \{1,\cdots,r\}$) is smooth over $S$. Then for each closed point $s\in S$,
\begin{itemize}
    \item[\emph{(a)}] $E_s\subseteq X'_s$ is an snc divisor,
    \item[\emph{(b)}] for any weight $\alpha:=(\alpha_1,\cdots,\alpha_r)$, $\mathcal{I}_{(X',E),\alpha,c}|_s=\mathcal{I}_{(X'_s,E_s),\alpha,c}$,
    \item[\emph{(c)}] any weighted blow-up of a variety is normal,
    \item[\emph{(d)}] the fiber of $X_{\alpha}\to X'$ at $s\in S$ is the weighted blow-up of $(X'_s,E_s)$ of weight $(\alpha_1,\cdots,\alpha_r)$, and
    \item[\emph{(e)}] if $X$ is a variety, $(X',E)$ a log smooth model over $X$ and $\mathrm{gcd}(\alpha_1,\cdots,\alpha_r)=1$, then $\nu_{(X',E),\alpha}$ is a divisorial valuation of the form $\mathrm{ord}_F$ for some prime divisor $F$ over $X'$, and $F$ can be extracted by the weighted blow up $X_{\alpha}\to X'$.
\end{itemize}
\end{lemma}

\begin{proof}
In this proof, we let $x\in X'$ be a closed point with $s=f(x)$.

\smallskip

Let us prove (a). Let $z_1,\cdots,z_r$ be local equations of $E_1,\cdots,E_r$ in $\mathcal{O}_{X',x}$. Since $z_1,\cdots,z_r$ is a regular sequence in $\mathcal{O}_{X',x}$, we obtain 
\begin{equation}\label{bra}
    \mathcal{O}_{X'_s,x}/(z_{1s},\cdots,z_{is})\cong \mathcal{O}_{X',x}/(z_{1},\cdots,z_i)\otimes_{\mathcal{O}_S}k(s),
\end{equation} 
where $z_{is}:=z_i\otimes_{\mathcal{O}_S} k(s)$ for each $i=1,\cdots,r$. We need to prove $z_{1s},\cdots,z_{rs}$ is a regular sequence in $\mathcal{O}_{X'_s,x}$. Note that $z_{1s}$ is a regular sequence in $\mathcal{O}_{X'_s,x}$, and consider the map
$$ 0\to \mathcal{O}_{X',x}/(z_1)\overset{\cdot z_2}{\to} \mathcal{O}_{X',x}/(z_1)\to \mathcal{O}_{X',x}/(z_1,z_2)\to 0.$$
Since $\bigcap_{j=1,\cdots,i}E_j$ is flat over $S$ and (\ref{bra}) holds, it induces
$$ 0\to \mathcal{O}_{X'_s,x}/(z_{1s})\overset{\cdot z_{2s}}{\to} \mathcal{O}_{X'_s,x}/(z_{1s})\to \mathcal{O}_{X'_s,x}/(z_{1s},z_{2s})\to 0,$$
and therefore $z_{1s},z_{2s}$ is a regular sequence in $\mathcal{O}_{X'_s,x}$. The proof of the fact that $z_{1s},\cdots,z_{is}$ is a regular sequence in $\mathcal{O}_{X'_s,x}$ for each $i=1,\cdots,r$ is the same.

\smallskip

Let us prove (b). We claim
\begin{equation}\label{hea}
\mathcal{O}_{X,x}/\left(\mathcal{I}_{(X',E),\alpha,c}\right)_x\text{ is a flat }\mathcal{O}_{S,s}\text{-algebra.}
\end{equation}
Let $t_1,\cdots,t_{r'}$ be a regular system of parameters of $\mathcal{O}_{S,s}$ (cf. \cite{Stacks}*{Definition 00KU, Lemma 00NQ}), and let $t_1=p=\mathrm{char}\,k(s)$ if the characteristics of $k(s)$ and $k(S)$ are different. Moreover, let $z'_1,\cdots,z'_{r''}$ be a regular sequence in $\mathcal{O}_{X',x}$ such that $z_1,\cdots,z_r,t_1,\cdots,t_{r'},z'_1,\cdots,z'_{r''}$ is a regular system of parameters in $\mathcal{O}_{X',x}$ generating the maximal ideal. Then, by the Cohen structure theorem (cf. \cite{Stacks}*{Theorem 032A}), we obtain that
$$ \widehat{\mathcal{O}_{X',x}}\cong k(s)[[t_1,\cdots,t_{r'},z_1,\cdots,z_r,z'_1,\cdots,z'_{r'}]]$$
if the characteristics of $k(s)$ and $k(S)$ are the same, and there is a Cohen ring $\Lambda$ with
$$ \widehat{\mathcal{O}_{X',x}}\cong \Lambda[[t_2,\cdots,t_{r'},z_1,\cdots,z_r,z'_1,\cdots,z'_{r'}]]$$
if the characteristics are different. Moreover, 
$$\left(\mathcal{I}_{(X',E),\alpha,c}\right)_x\cdot \widehat{\mathcal{O}_{X',x}}=\left\{\sum_{(c_1,\cdots,c_r)\in \Z^r_{\ge 0}} a_{c_1,\cdots,c_r}z^{c_1}_1\cdots z^{c_r}_r\,\middle|\,a_{c_1,\cdots,c_r}\in R\text{ and }\sum^r_{i=1} c_i \alpha_i\ge c\right\},$$
and
$$ \widehat{\mathcal{O}_{X
',x}}/\left(\left(\mathcal{I}_{(X',E),\alpha,c}\right)_x\cdot \widehat{\mathcal{O}_{X',x}}\right)=\left\{\sum_{(c_1,\cdots,c_r)\in \Z^r_{\ge 0}} a_{c_1,\cdots,c_r}z^{c_1}_1\cdots z^{c_r}_r\,\middle|\,a_{c_1,\cdots,c_r}\in R\text{ and }\sum^r_{i=1} c_i \alpha_i<c\right\}$$
where 
$$R=k(s)[[t_1,\cdots,t_{r'},z'_1,\cdots,z'_{r''}]]$$ in the equal characteristic case, and $$R=\Lambda[[t_2,\cdots,t_{r'},z'_1,\cdots,z'_{r''}]]$$
in the mixed characteristic case. In this situation, $t_1,\cdots,t_r$ (in the mixed characteristic case, $t_1=p$) is a regular sequence in 
$$\widehat{\mathcal{O}_{X
',x}}/\left(\left(\mathcal{I}_{(X',E),\alpha,c}\right)_x\cdot \widehat{\mathcal{O}_{X',x}}\right).$$ 
Moreover, we have that the natural morphism
$$ \widehat{\mathcal{O}_{X
',x}}/\left(\left(\mathcal{I}_{(X',E),\alpha,c}\right)_x\cdot \widehat{\mathcal{O}_{X',x}}\right)\to \left(\mathcal{O}_{X',x}/\left(\mathcal{I}_{(X',E),\alpha,c}\right)_x\right)\otimes_{\mathcal{O}_{X,x}} \widehat{\mathcal{O}_{X',x}} $$
is an isomorphism. Hence, by \cite{Stacks}*{Lemma 00LM}, we have that $t_1,\cdots,t_r$ is a regular sequence in $\mathcal{O}_{X',x}/\left(\mathcal{I}_{(X',E),\alpha,c}\right)_x$ (recall \cite{Stacks}*{Lemma 00MC} that any completion is faithfully flat), and applying \cite{Stacks}*{Lemma 07DY} gives us (\ref{hea}).

\smallskip

Therefore, $\mathrm{Tor}^{\mathcal{O}_{S,s}}_1\left(k(s),\mathcal{O}_{X',x}/\left(\mathcal{I}_{(X',E),\alpha,c}\right)_x\right)=0$. Consider an exact sequence
$$ 0\to \left(\mathcal{I}_{(X',E),\alpha,c}\right)_x\to \mathcal{O}_{X',x}\to \mathcal{O}_{X',x}/\left(\mathcal{I}_{(X',E),\alpha,c}\right)_x\to 0.$$
By taking $k(s)$ on the exact sequence, we obtain an injection
$$ 0\to \left(\mathcal{I}_{(X',E),\alpha,c}\right)_x\otimes_{\mathcal{O}_{S,s}}k(s)\to \mathcal{O}_{X'_s,x}.$$
Note that the image is $\left(\mathcal{I}_{(X'_s,E_s),\alpha,c}\right)_x$, and thus we proved (b).

\smallskip

Let us prove (c) for the variety case. Note that $\mathcal{I}_{(X',E),\alpha,c}$ is the integral closure of $\left(z^{\frac{c}{\alpha_1}}_1,\cdots,z^{\frac{c}{\alpha_r}}_r\right)$ for any sufficiently divisible $c$. Hence, by applying \cite{HS06}*{Proposition 5.2.1, Theorem 18.4.2}, we obtain the assertion.

\smallskip

(d) can be proved by the explicit description of the weighted blow-up and (b).

\smallskip

(e) is just \cite{LX20}*{Definition 2.8}.
\end{proof}

\section{Positive characteristic theory} \label{333}
Let $k$ be an algebraically closed field of characteristic $p>0$, and let $X$ be a finite type scheme over $k$. Then the \emph{Frobenius} $F_X:X\to X$ is defined by the identity on the underlying topological space of $X$, and $p$-th power on the corresponding map of structure sheaves $\mathcal{O}_X\to (F_X)_*\mathcal{O}_X$. For a positive integer $e$, let us denote by $F^e_X:X\to X$ the $e$-th iteration of the Frobenius. If there is no confusion, we denote by $F$ the Frobenius on $X$.

In this section, all varieties and pairs are over $k$.

\subsection{Strong and global $F$-regularity}
The notion of strong $F$-regularity is introduced in \cite{HH89}, and later a global analog of the notion, global $F$-regularity is introduced in \cite{Smi00}. Furthermore, a relation of global $F$-regularity and Fano type property is established in \cite{SS10}. In this subsection, we collect basic definitions and properties on the subject.

\smallskip

Let $X$ be a normal variety, and fix any effective divisor $D$ on $X$. We have an inclusion $\mathcal{O}_X\hookrightarrow \mathcal{O}_X(D)$. Thus, we also have an inclusion $F^e_*\mathcal{O}_X\hookrightarrow F^e_*\mathcal{O}_X(D)$, and thus the composition
$$ \mathcal{O}_X\hookrightarrow F^e_*\mathcal{O}_X\hookrightarrow F^e_*\mathcal{O}_X(D).$$
The composition provides the definition of \emph{globally $F$-regular variety}.

\begin{definition} \label{asdfaasf}
Let $(X,\Delta)$ be a pair. Then $(X,\Delta)$ is \emph{globally $F$-regular} if, for every effective divisor $D$, there exists some $e>0$ such that the composition $\mathcal{O}_X\hookrightarrow F^e_*\mathcal{O}_X(\ceil{(p^e-1)\Delta}+D)$ splits in the category of $\mathcal{O}_X$-modules.

Moreover, $(X,\Delta)$ is \emph{strongly $F$-regular} if there is an open covering $\{U_i\}$ of $X$ such that $(U_i,\Delta|_{U_i})$ is globally $F$-regular.
\end{definition}

Recall one of the main theorems in \cite{SS10}.

\begin{theorem}[{cf. \cite{SS10}*{Theorem 1.1, Remark 4.11}}] \label{x}
Let $(X,\Delta)$ be a projective globally $F$-regular pair. Then there is an effective $\Q$-divisor $\Delta'$ such that
\begin{itemize}
    \item[\emph{(a)}] $(X,\Delta+\Delta')$ is globally $F$-regular, and
    \item[\emph{(b)}] $K_X+\Delta+\Delta'\sim_{\Q} 0$.
\end{itemize}
\end{theorem}

Finally, let us recall the definition of \emph{globally $F$-regular type pair}.

\begin{definition} \label{type}
Let $(X,\Delta)$ be a pair over $\C$. We say that $(X,\Delta)$ is \emph{globally $F$-regular type} if there is a finitely generated $\Z$-algebra $R$ with a model $(X_R,\Delta_R)$ of $(X,\Delta)$ over $R$, and a Zariski-dense subset $S'\subseteq \Spec R$ such that for any $s\in S'$, the geometric closed fiber $(X_{\overline{s}},\Delta_{\overline{s}})$ is globally $F$-regular.
\end{definition}

\subsection{Test ideal}
Roughly, test ideal is an analog of the multiplier ideal sheaf in positive characteristic fields. There are many candidates in defining the test ideal, and we recall the definition of \emph{big test ideal}.

\begin{definition}[{cf. \cite{BSTZ10}*{Definition-Proposition 3.3}}]
Suppose that $(X=\Spec R,\Delta,\mathfrak{a}^t)$ is a triple where $R$ is an $F$-finite normal domain of characteristic $p>0$. Then the (big) \emph{test ideal} $\tau(X,\Delta,\mathfrak{a}^{\lambda})$ is the sum
$$ \sum_{e\ge 0}\sum_{\phi}\phi(F^e_*(d\mathfrak{a}^{\ceil{\lambda(p^e-1)}})),$$
where $\phi$ ranges over $\phi\in \mathrm{Hom}_R\left(F^e_*R(\ceil{(p^e-1)\Delta}),R\right)\subseteq \mathrm{Hom}_R(F^e_*R,R)$ and where $d$ is a big sharp test element for $(X,\Delta,\mathfrak{a}^{\lambda})$ (for the definition of \emph{big sharp test element}, see \cite{Sch10}*{Definition 2.16}).

Moreover, let $(X,\Delta,\mathfrak{a}^{\lambda})$ be a triple in general where $X$ is an $F$-finite scheme. Then the \emph{test ideal} $\tau(X,\Delta,\mathfrak{a}^{\lambda})$ is defined by an ideal sheaf of $\mathcal{O}_X$ such that for any affine open $U\subseteq X$, $\tau(X,\Delta,\mathfrak{a}^{\lambda})|_U=\tau(U,\Delta|_U,\left(\mathfrak{a}|_U\right)^{\lambda})$.

If $X$ is a normal projective variety and $D$ a Cartier divisor, then we denote by
$$ \tau(X,\Delta,\lambda|D|):=\tau(X,\Delta,\mathfrak{b}(|D|)^{\lambda})$$
for any $\lambda\ge 0$. We denote by $\tau(X,\Delta):=\tau(X,\Delta,\mathcal{O}_X)$.
\end{definition}

\begin{remark}
The notion of a big test ideal coincides with the usual test ideal for any pair $(X,\Delta)$ (cf. \cite{BSTZ10}*{Proposition 3.7}). Moreover, the big test ideal behaves well in localization, and so we recall the definition of the big test ideal instead of the usual test ideal.
\end{remark}

We have a simple criterion of strong $F$-regularity using the test ideal.

\begin{lemma}[{cf. \cite{Tak04}*{Corollary 2.10}}] \label{cri}
Let $(X,\Delta)$ be a pair. Then $(X,\Delta)$ is strongly $F$-regular if and only if $\tau(X,\Delta)=\mathcal{O}_X$.
\end{lemma}

The following theorem bridges positive characteristic theory and the theory of singularities in the minimal model program.

\begin{theorem} \label{test to multiplier}
Let $(X,\Delta)$ be a pair, $\mathfrak{a}$ an ideal in $\mathcal{O}_X$, $f:X'\to X$ a proper birational morphism from a normal variety $X'$ such that $\mathfrak{a}\cdot \mathcal{O}_{X'}=\mathcal{O}_{X'}(-F)$ for some effective divisor $F$ on $X'$. Then
$$ \tau(X,\Delta,\mathfrak{a}^t)\subseteq f_*\mathcal{O}_{X'}(\ceil{K_{X'}-f^*(K_X+\Delta)-tF})$$
\end{theorem}

\begin{proof}
Let $m$ be a positive integer and $s\in \mathfrak{a}^{\ceil{m\lambda}}$. It follows from \cite{Tak04}*{Theorem 2.13} that
$$
\begin{aligned}
\tau\left(X,\Delta+\frac{1}{m}\div s\right)&\subseteq f_*\mathcal{O}_{X'}\left(\ceil{K_{X'}-f^*\left(K_X+\Delta+\frac{1}{m}\div s\right)}\right)
\\ &\subseteq f_*\mathcal{O}_{X'}\left(\ceil{K_{X'}-f^*\left(K_X+\Delta\right)-\lambda F}\right).
\end{aligned}
$$
\end{proof}

\section{Fujita approximation and Jiao's result on volumes of big divisors} \label{444}
In this section, we state the main theorem of \cite{Jia25}, and prove an analog of the main theorem. Let us recall the Fujita approximation theorem.

\begin{theorem}[{cf. \cite{Fuj94}*{Theorem} and \cite{Tak07}*{Theorem 0.1}}] \label{Fujita}
Let $X$ be a projective variety, and $D$ a big Cartier divisor on $X$. Then, for an arbitrarily small real number $\varepsilon>0$, there exists a birational morphism $f:X'\to X$ of projective varieties and a decomposition
$$ f^*D=A+E$$
which satisfies the following conditions:
\begin{itemize}
    \item[(a)] $A$ is an ample divisor $E$ is an effective divisor on $X$.
    \item[(b)] $\mathrm{vol}(A)>\mathrm{vol}(D)-\varepsilon$.
\end{itemize}
\end{theorem}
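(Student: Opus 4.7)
The plan is to follow the classical argument using the moving parts of large linear series. Write $n = \dim X$. Since $D$ is big, $h^{0}(X,mD) = (\mathrm{vol}(D)/n!) m^{n} + o(m^{n})$ for $m$ sufficiently large and divisible. For each such $m$, I would take a projective birational morphism $f_{m}\colon X'_{m}\to X$ that resolves the base ideal $\mathfrak{b}(|mD|)$, and write
$$ f_{m}^{*}(mD) = M_{m} + F_{m}, $$
where $|M_{m}|$ is base-point free (the moving part) and $F_{m}\ge 0$ is the fixed part. Then $M_{m}$ is semi-ample, hence nef, so $\mathrm{vol}(M_{m}) = (M_{m})^{n}$, and pulling back sections yields $h^{0}(X'_{m},M_{m}) = h^{0}(X,mD)$.

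The heart of the proof is the volume comparison
$$ \lim_{m\to \infty}\frac{(M_{m})^{n}}{m^{n}} = \mathrm{vol}(D). $$
The inequality $(M_{m})^{n}\le m^{n}\mathrm{vol}(D)$ is immediate since $F_{m}\ge 0$ and $f_{m}^{*}(mD)$ has volume $m^{n}\mathrm{vol}(D)$. For the reverse inequality one uses the morphism $\phi_{m}\colon X'_{m}\to \mathbb{P}^{N_{m}}$ defined by $|M_{m}|$, which is generically finite onto its image for $m$ large by bigness of $D$, and bounds the degree of $\phi_{m}(X'_{m})$ from below in terms of $h^{0}(X,mD)$; this yields $(M_{m})^{n}\ge n!\, h^{0}(X,mD) - o(m^{n})$ and hence the desired limit. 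In positive characteristic (which is needed to invoke the theorem in Section \ref{333}), one substitutes Takagi's argument \cite{Tak07} for Hironaka-style resolution, either via alterations or by working directly with the base ideal on a normalised blowup.

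Finally, to extract an ample summand, fix $\varepsilon>0$ and choose $m$ large enough that $(M_{m})^{n}/m^{n} > \mathrm{vol}(D) - \varepsilon/2$. Let $H$ be any ample divisor on $X'_{m}$; since $M_{m}$ is big and nef, Kodaira's lemma provides, for arbitrarily small $\delta>0$, an effective $\mathbb{Q}$-divisor $E_{\delta}$ with $M_{m} - \delta H \sim_{\mathbb{Q}} E_{\delta}$, hence a decomposition $M_{m}\sim_{\mathbb{Q}} A_{\delta} + E'_{\delta}$ with $A_{\delta}$ ample $\mathbb{Q}$-Cartier and $E'_{\delta}$ effective, satisfying $\mathrm{vol}(A_{\delta}) > (M_{m})^{n} - \eta$ for $\delta$ small (by continuity of the volume on the big cone). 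Dividing by $m$ gives $f_{m}^{*}D \sim_{\mathbb{Q}} (A_{\delta}/m) + (E'_{\delta}+F_{m})/m$, and $\mathrm{vol}(A_{\delta}/m) = \mathrm{vol}(A_{\delta})/m^{n} > \mathrm{vol}(D) - \varepsilon$; clearing denominators then yields an actual Cartier decomposition $f^{*}D = A+E$ with the required properties.

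The main obstacle is the volume estimate in the second paragraph, which requires a delicate degree comparison on the image of the semi-ample map $\phi_{m}$ and is the technical heart of Fujita's theorem; the perturbation in the last step is routine once this estimate is in hand. Since this is a well-established result with detailed proofs in \cite{Fuj94} and \cite{Tak07}, I would appeal to those references rather than reproducing the full argument, using only the statement as quoted.
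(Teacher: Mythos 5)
The paper gives no proof of this theorem at all --- it is stated as a recalled result with citations to Fujita and Takagi, and your eventual decision to ``appeal to those references rather than reproducing the full argument'' is precisely what the paper does. So at the level of what the paper commits to, you are in agreement.

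That said, the heuristic you sketch for the reverse volume inequality is not correct as stated, and it is worth flagging because it is the genuine technical heart of the theorem. You claim that bounding the degree of $\phi_{m}(X'_{m})\subset\mathbb{P}^{N_{m}}$ from below yields $(M_{m})^{n}\ge n!\,h^{0}(X,mD)-o(m^{n})$. But the only elementary lower bound on the degree of a nondegenerate $n$-fold in $\mathbb{P}^{N_{m}}$ is $\deg\ge N_{m}-n+1$, which with $N_{m}+1=h^{0}(mD)\sim\mathrm{vol}(D)m^{n}/n!$ gives $(M_{m})^{n}\gtrsim\mathrm{vol}(D)m^{n}/n!$, i.e.\ a bound that is short by a factor of $n!$. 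The actual proofs are substantially more delicate: Fujita's original argument uses a careful inductive degeneration, the Demailly--Ein--Lazarsfeld proof goes through asymptotic multiplier ideals (see \cite{Laz04b}*{Theorem 11.4.4}), the Lazarsfeld--Musta\c{t}\u{a} proof uses Okounkov bodies, and Takagi's characteristic-$p$ proof \cite{Tak07} uses de Jong's alterations together with Frobenius techniques in place of Kodaira vanishing. None of these reduce to a minimal-degree estimate on the image. Since both you and the paper ultimately defer to the references, this gap does not affect the paper's logic, but the sketch should not be read as a proof of the key estimate.
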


Note that the characteristic of $k$ could be positive. Next, let us recall the main result of \cite{Jia25}.

\begin{theorem}[{cf. \cite{Jia25}*{Theorem 1.1}}] \label{main111}
Let $f:X\to S$ be a projective flat morphism of varieties over a field of characteristic $0$ and $D$ a Cartier divisor on $X$. Suppose $X_{\overline{\eta}}$ is the geometric generic fiber of $f$. Write $D_{\overline{\eta}}:=D|_{X_{\overline{\eta}}}$ and $D_s:=D|_{X_s}$ for every closed point $s\in S$. Then
$$ \mathrm{vol}(D_{\overline{\eta}})=\inf_{s\in S'}\mathrm{vol}(D_s)$$
for any Zariski-dense subset $S'\subseteq S$.
\end{theorem}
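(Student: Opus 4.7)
The plan is to split the equality into the easy inequality $\mathrm{vol}(D_{\overline{\eta}}) \le \inf_{s \in S'}\mathrm{vol}(D_s)$ via upper semicontinuity of cohomology, and the harder reverse inequality, which I would prove by constructing, for every $\varepsilon > 0$, a Zariski-dense open subset $V_\varepsilon \subseteq S$ on which $\mathrm{vol}(D_s) \le \mathrm{vol}(D_{\overline{\eta}}) + \varepsilon$. Zariski density of $S'$ then forces $V_\varepsilon \cap S' \ne \emptyset$, and letting $\varepsilon \to 0$ finishes the argument.

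For the easy direction, set $n = \dim X_s = \dim X_{\overline{\eta}}$, which is constant in the family by flatness of $f$. For each $m \ge 1$, Grauert's upper semicontinuity theorem applied to the flat family of coherent sheaves $\mathcal{O}_X(mD)$ on $X \to S$ shows that $s \mapsto h^0(X_s, mD_s)$ attains its minimum at the generic fiber; hence
\[
h^0(X_s, mD_s) \ge h^0(X_{\overline{\eta}}, mD_{\overline{\eta}})
\]
for every closed point $s \in S$. Multiplying by $n!/m^n$ and taking $\limsup$ as $m \to \infty$ yields $\mathrm{vol}(D_s) \ge \mathrm{vol}(D_{\overline{\eta}})$ for every closed $s$, which gives the easy inequality after passing to the infimum.

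For the harder direction, I would first reduce to the case where $D_{\overline{\eta}}$ is big (if it is not, then $\mathrm{vol}(D_{\overline{\eta}}) = 0$ and the claim amounts to producing arbitrarily small vols on $S'$; this follows by spreading the bound $h^0(X_{\overline{\eta}}, mD_{\overline{\eta}}) = o(m^n)$ via Remark \ref{remark}). Assuming $D_{\overline{\eta}}$ is big, fix $\varepsilon > 0$ and apply Fujita approximation (Theorem \ref{Fujita}): produce $m_0 \gg 0$, a projective birational $\pi: Y \to X_{\overline{\eta}}$, and a decomposition $\pi^{*}(m_0 D_{\overline{\eta}}) = A + E$ with $A$ ample, $E$ effective, and $A^n > m_0^n\bigl(\mathrm{vol}(D_{\overline{\eta}}) - \varepsilon\bigr)$. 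By Remark \ref{remark}, after replacing $S$ by a finite cover and shrinking, this spreads to a relative morphism $\pi_S: Y_S \to X_S$ together with $\pi_S^{*}(m_0 D) = A_S + E_S$, and by Lemma \ref{intersection} one has $A_s^n = A^n$ on a dense open subset of $S$.

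The main obstacle is converting this data into the desired upper bound $\mathrm{vol}(D_s) \le \mathrm{vol}(D_{\overline{\eta}}) + \varepsilon$, since the relative Fujita approximation above only yields the lower bound $\mathrm{vol}(m_0 D_s) \ge \mathrm{vol}(A_s) = A^n$. I would close the gap by combining the $h^0$-semicontinuity above, applied at finitely many values of $m$ large enough to realize the Fujita limit to precision $\varepsilon$, with Lemma \ref{Jiao}, which detects any drop in $\mathrm{vol}(D_s)$ through Nakayama's asymptotic orders along the components of $E_s$ on $Y_s$: any excess in $\mathrm{vol}(D_s)$ beyond $A^n/m_0^n$ would manifest as strictly larger $h^0$ or strictly smaller asymptotic orders on special fibers, and neither phenomenon persists on a dense open of $S$, again by semicontinuity. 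Pushing this semicontinuity-of-$\sigma_E$ step through uniformly in the family is the technical crux; once it is in place, intersecting the resulting dense open with $S'$ and letting $\varepsilon \to 0$ completes the proof.
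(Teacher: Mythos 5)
You correctly identify the easy inequality via upper semicontinuity, and your framework for the hard direction --- reduce to $D_{\overline{\eta}}$ big, Fujita-approximate on the generic fiber, spread out $\pi,A,E$ to the family, and find a dense open $V_\varepsilon$ that must meet $S'$ --- is exactly the skeleton of the proof in the paper (which follows Jiao's \cite{Jia25}*{Theorem 1.1}, as the paper explicitly works this out in the parallel Theorem~\ref{main222}). You also correctly locate the crux: the spread-out Fujita decomposition $\pi_s^*(m_0 D_s)=A_s+E_s$ only gives the lower bound $\mathrm{vol}(D_s)\ge A^n/m_0^n$, and some additional mechanism is needed for the upper bound.

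However, the mechanism you propose does not close the gap. Invoking semicontinuity of $h^0$ ``at finitely many values of $m$'' cannot control $\mathrm{vol}(D_s)$, since the jump loci of $h^0(mD_s)$ grow with $m$ and there is no single dense open on which all of them are constant; and ``semicontinuity of $\sigma_E$ along the finitely many components of $E$'' does not bound the volume from above either, because a jump in $\mathrm{vol}(D_s)$ could be caused by entirely different divisors over the special fiber, not those appearing in $E$. Lemma~\ref{Jiao} is a biconditional between a volume drop and an asymptotic-order bound on a fixed divisor; it gives no uniform control over all divisors simultaneously. What the paper actually uses, and what your proposal is missing, is the quantitative differentiability-of-volume estimate of Boucksom--Demailly--P\u aun--Peternell (\cite{BPDD13}*{4.1 Theorem}, or \cite{Laz04b}*{Theorem 11.4.21}): it bounds $(A^{d-1}\cdot E)^2$ by a universal constant times $(H^d)\cdot(\mathrm{vol}(D)-\mathrm{vol}(A))\le C\varepsilon(H^d)$, which is an intersection-theoretic quantity that can be transported to general fibers by Lemma~\ref{intersection}. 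One then applies a \emph{second} Fujita approximation to the special fiber $D_{\overline{s}}$, writing $g^*(A_{\overline{s}}+E_{\overline{s}})\sim_{\Q}A'_{\overline{s}}+E'_{\overline{s}}$, and uses the generalized inequality of Hodge type (\cite{Laz04a}*{Theorem 1.6.1}) to bound $\bigl(A'^{\,d}_{\overline{s}}\bigr)^{1/d}$ in terms of $A^{d-1}_{\overline{s}}\cdot(A_{\overline{s}}+E_{\overline{s}})$. Chaining these yields an explicit upper bound on $\mathrm{vol}(D_{\overline{s}})$ tending to $\mathrm{vol}(D)$ as $\varepsilon,\varepsilon'\to 0$. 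Without the BDPP estimate and the Hodge-type inequality, the argument does not go through, so this is a genuine gap in your proposal.
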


The following theorem can be considered as a reduction mod $p$ version of Theorem \ref{main111}.

\begin{theorem} \label{main222}
Let $X$ be a normal projective variety over $\C$, $D$ a Cartier divisor, and let $R$ be a finitely generated $\Z$-algebra such that there is a model $X_R$ of $X$ over $R$. For a closed point $s\in \Spec R$,
$$ \mathrm{vol}(D)=\inf_{s\in S'}\mathrm{vol}(D_{\overline{s}})$$
for any Zariski-dense subset $S'\subseteq \Spec R$.
\end{theorem}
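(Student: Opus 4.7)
The plan imitates the proof of Theorem \ref{main111}, combining upper-semicontinuity of cohomology (for the easy direction) with Fujita approximation (Theorem \ref{Fujita}) and Lemma \ref{intersection} (for the hard direction). After suitably shrinking $R$, I may assume that $X_R \to \Spec R$ is projective and flat and that $D_R$ is a Cartier divisor on $X_R$, so each $\mathcal{O}_{X_R}(mD_R)$ is flat over $\Spec R$.

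The easy direction $\inf_{s \in S'} \vol(D_{\overline s}) \ge \vol(D)$ follows from upper-semicontinuity of $h^0$ in flat projective families: since the function $s \mapsto h^0(X_{\overline s}, mD_{\overline s})$ attains its minimum on the dense open where it equals its generic-fiber value, one has $h^0(X_{\overline s}, mD_{\overline s}) \ge h^0(X_{\overline\eta}, mD_{\overline\eta}) = h^0(X, mD)$ at every closed $s \in \Spec R$, the last equality coming from flat base change along $K(R) \hookrightarrow \C$. Dividing by $m^n/n!$ and taking $\limsup_{m \to \infty}$ yields $\vol(D_{\overline s}) \ge \vol(D)$ for every closed $s$.

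For the harder inequality $\inf \le \vol(D)$, I fix $\varepsilon > 0$ and apply Fujita approximation on $X$ over $\C$ to produce a projective birational $f: X' \to X$ with $f^{*}D = A + E$, where $A$ is ample, $E$ effective, and $\vol(A) > \vol(D) - \varepsilon$. After enlarging $R$ (see Remark \ref{remark}), this entire configuration spreads out to a model $f_R: X'_R \to X_R$ over $\Spec R$ with $A_R$ relatively ample and $E_R$ effective. For closed $s$ in some dense open $V_\varepsilon \subseteq \Spec R$, the divisor $A_{\overline s}$ is ample on $X'_{\overline s}$, so $\vol(A_{\overline s}) = A_{\overline s}^n$, which by Lemma \ref{intersection} equals $A^n = \vol(A)$. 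The remaining step is to promote this rigidity, via Lemma \ref{Jiao} applied fiberwise, to the estimate
\[
\vol(D_{\overline s}) \le A_{\overline s}^n + \bigl(\vol(D) - \vol(A)\bigr) + o_\varepsilon(1) < \vol(D) + 2\varepsilon
\]
on a (possibly smaller) dense open $W_\varepsilon$: the morphism $f_{\overline s}$ remains birational on this open, so $\vol(D_{\overline s}) = \vol((A + E)_{\overline s})$, and the asymptotic-order formalism of Lemma \ref{Jiao} applied to the effective divisor $E$, together with the rigidity of intersection numbers under spreading out, controls how $\vol((A + E)_{\overline s}) - A_{\overline s}^n$ deviates from the generic-fiber defect $\vol(D) - \vol(A) < \varepsilon$. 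Choosing $s_0 \in S' \cap W_\varepsilon$, which exists by Zariski-density of $S'$, yields $\vol(D_{\overline{s_0}}) < \vol(D) + 2\varepsilon$, so $\inf_{s \in S'} \vol(D_{\overline s}) \le \vol(D) + 2\varepsilon$; letting $\varepsilon \to 0$ and combining with the lower bound gives the equality.

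The main obstacle I expect is precisely the fiberwise defect bound $\vol((A + E)_{\overline s}) - A_{\overline s}^n \le \vol(D) - \vol(A) + o_\varepsilon(1)$: upper-semicontinuity of $h^0$ by itself only yields lower bounds on $\vol(D_{\overline s})$, and in mixed characteristic $h^0$ can genuinely jump upward via Frobenius-produced sections on special fibers. Anchoring the estimate on the ample piece $A$ (whose volume is the invariant intersection number $A^n$, rigid by Lemma \ref{intersection}) and controlling the contribution of $E$ using Lemma \ref{Jiao} via Nakayama's asymptotic order $\sigma_E$ is the key technical ingredient that transports the generic-fiber defect $\vol(D) - \vol(A) < \varepsilon$ to closed fibers.
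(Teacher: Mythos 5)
Your easy direction is correct and matches the paper's. But the hard inequality $\inf_s \vol(D_{\overline s}) \le \vol(D)$ is where the proof actually lives, and there you have a genuine gap: you state that the key estimate "follows" from anchoring on $A$ plus "the asymptotic-order formalism of Lemma~\ref{Jiao}," but you never give that argument, and Lemma~\ref{Jiao} is not the right tool. Lemma~\ref{Jiao} is a \emph{qualitative} equivalence ($a > \sigma_E(D)$ iff $\vol(D-aE) < \vol(D)$); it provides no quantitative control of $\vol((A+E)_{\overline s}) - A_{\overline s}^d$ in terms of the rigid number $A^{d-1}\cdot E$, and indeed the paper never invokes Lemma~\ref{Jiao} in the proof of Theorem~\ref{main222} (that lemma is used later, in Theorems~\ref{imain1} and~\ref{imain2}, and in fact its application there \emph{relies on} Theorem~\ref{main222} and~\ref{main111}, not the other way around). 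Moreover, comparing $\sigma_{E_{\overline s}}(D_{\overline s})$ with $\sigma_E(D)$ across fibers only goes the unhelpful way via semicontinuity, so this route does not close.

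The paper instead argues quantitatively. After Fujita approximation $f^{*}D \sim_{\Q} A+E$ over $\C$ and spreading out, it (i) fixes a sufficiently ample $H_R$ with $H_R \pm D_R$ ample, and invokes the Bonnesen/Diskant-type inequality \cite{BPDD13}*{4.1 Theorem} (or \cite{Laz04b}*{Theorem 11.4.21}) to get $(A^{d-1}\cdot E)^2 \le C(H^d)(\vol(D)-\vol(A)) \le C(H^d)\varepsilon$, hence $A^{d-1}\cdot(A+E) \le A^d + C'\sqrt{\varepsilon}$; (ii) uses Lemma~\ref{intersection} to transport this \emph{intersection-number} inequality to geometric fibers $\overline s$; (iii) performs a \emph{second} Fujita approximation $g : X'' \to X'_{\overline s}$ of $A_{\overline s}+E_{\overline s}$ on the fiber, writing $g^{*}(A_{\overline s}+E_{\overline s}) \sim_{\Q} A'_{\overline s}+E'_{\overline s}$ with $\vol(A'_{\overline s}) \ge \vol(D_{\overline s}) - \varepsilon'$; and (iv) closes with the Khovanskii--Teissier inequality $(A_{\overline s}^d)^{(d-1)/d}(A'^d_{\overline s})^{1/d} \le (g^{*}A_{\overline s})^{d-1}\cdot A'_{\overline s} \le A_{\overline s}^{d-1}\cdot(A_{\overline s}+E_{\overline s})$. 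This second approximation on the fiber and the Hodge-type inequality are precisely the missing ingredients in your sketch: rigidity of $A_{\overline s}^{d-1}\cdot E_{\overline s}$ alone does not bound $\vol(D_{\overline s})$, because $\vol$ is not an intersection number of a non-nef class; you need to first replace $A_{\overline s}+E_{\overline s}$ by a new ample $A'_{\overline s}$ of almost the same volume and then use Khovanskii--Teissier to compare $A'^d_{\overline s}$ with the rigid number $A_{\overline s}^{d-1}\cdot(A_{\overline s}+E_{\overline s})$. Finally, you also omit the reduction from general $D$ to big $D$ (via adding $tH$ and separating the pseudo-effective / non-pseudo-effective cases using \cite{GT16}*{Lemma 4.1}), which the paper needs to run before assuming $D$ is big.
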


\begin{proof}
Note that the proof is almost the same as \cite{Jia25}*{Proof of Theorem 1.1}.

\smallskip

Using the upper semicontinuity property of cohomology (refer to \cite{Har77}*{Theorem 12.8}), it follows that
$$ \mathrm{vol}(D) \leq \inf_{s \in S'} \mathrm{vol}(D_s). $$
Assume this result holds when $D$ is big. If $D$ is not big $D$, we choose an ample divisor $H_R$ on $X_R$. We define $H := H_R \times_R \C$ and
$$ t_0 := \inf\{ t \in \R_{\ge 0} \mid D + tH \text{ becomes big}\}. $$
By applying the given result to $D + tH$ for each $t > t_0$, it results in
$$ \mathrm{vol}(D+tH) = \inf_{s \in S'} \mathrm{vol}(D_{\overline{s}}+tH_{\overline{s}}). $$
Let us examine these two scenarios:

\smallskip

\noindent\textbf{Case 1}. If $D$ is pseudo-effective, then $t_0=0$. By letting $t\to 0^+$, we have
$$ \mathrm{vol}(D)=\inf_{s\in S'}\mathrm{vol}(D_{\overline{s}})=0.$$

\smallskip

\noindent\textbf{Case 2}. If $D$ is not pseudo-effective, then by \cite{GT16}*{Lemma 4.1}, for a general closed point $s\in \Spec R$, $D_{\overline{s}}$ is not pseudo-effective. Therefore, there is $s\in S'$ such that $D_{\overline{s}}$ is not pseudo-effective, and thus
$$ \mathrm{vol}(D)=\inf_{s\in S'}\mathrm{vol}(D_{\overline{s}})=0.$$
\smallskip

From now on we may assume $D$ is big, and we fix a sufficiently ample divisor $H_R$ on $X_R$ so that both $H_R \pm D_R$ are ample on $X_R$. Let us define $H:=H_R\times_R \C$. Consider any ring extension $R\subseteq R'$ where $R'$ is a finitely generated $\Z$-algebra. For any closed point $s'\in \Spec R'$ with its image $s$ in $\Spec R$, it holds that $X_{\overline{s}}=X_{\overline{s'}}$. Therefore, if required, we may replace $R$ with $R'$.

\smallskip

Let $\varepsilon>0$ and consider $f:X'\to X$, which serves as a Fujita approximation for $D$ (see Theorem \ref{Fujita}). This approximation satisfies $f^*D\sim_{\Q}A+E$, where $X'$ is smooth, $E\ge 0$, and $A$ is ample. Further, it holds that 
$$ \mathrm{vol}(A)\ge \mathrm{vol}(D)-\varepsilon.$$
By shrinking $R$, we can assume the existence of models $f_R:X'_R\to X_R$, $D_R$, $A_R$, and $E_R$ corresponding to $f:X'\to X$, $D$, $A$, and $E$ over $R$, respectively (refer to Remark \ref{remark}). Additionally, we can assume that
$$ f^*_RD_R\sim_{\Q}A_R+E_R.$$

\smallskip

Define $d$ as $\dim X$. According to \cite{BPDD13}*{Theorem 4.1} or \cite{Laz04b}*{Theorem 11.4.21}, there is a universal constant $C>0$ such that
$$ (A^{d-1}\cdot E)^2 \leq C \cdot (H^d) \cdot (\mathrm{vol}(D) - \mathrm{vol}(A)) \leq C\varepsilon \cdot (H^d),$$
and define $C':=\sqrt{C \cdot (H^d)}$. Note that $C'$ depends only on $H$ and $A^{d-1} \cdot E$.

\smallskip

Observe that
$$ A^{d-1}\cdot (A+E)\le A^d+C'\sqrt{\varepsilon}.$$
Utilizing Lemma \ref{intersection}, it follows that
\begin{equation}\label{1} A^{d-1}_{\overline{s}}\cdot (A_{\overline{s}}+E_{\overline{s}})\le A^d_{\overline{s}}+C'\sqrt{\varepsilon}
\end{equation}
for every general closed point $s\in \Spec R$.

\smallskip

Note $A_{\overline{s}}+E_{\overline{s}}\sim_{\Q} f^*_{\overline{s}}D_{\overline{s}}$ for every general $s\in \Spec R$. Fix a closed point $s\in S'$ and choose $\varepsilon'>0$. By the Fujita approximation theorem (cf. Theorem \ref{Fujita}), we may choose a projective birational morphism $g:X''\to X'_{\overline{s}}$ such that
$$ g^*f^*_{\overline{s}}D_{\overline{s}}\sim_{\Q}g^*(A_{\overline{s}}+E_{\overline{s}})\sim_{\Q}A'_{\overline{s}}+E'_{\overline{s}},$$
where $E'_{\overline{s}}\ge 0$, $A'_{\overline{s}}$ is ample, and
$$ \mathrm{vol}(A'_{\overline{s}})\ge \mathrm{vol}(A_{\overline{s}}+E_{\overline{s}})-\varepsilon'=\mathrm{vol}(D_{\overline{s}})-\varepsilon'.$$
We have
\begin{equation} \label{2}(g^*A_{\overline{s}})^{d-1}\cdot A'_{\overline{s}}\le (g^*A_{\overline{s}})^{d-1}\cdot (A'_{\overline{s}}+E'_{\overline{s}})=A^{d-1}_{\overline{s}}\cdot (A_{\overline{s}}+E_{\overline{s}}).
\end{equation}
If we apply the generalized inequality of Hodge type (cf. \cite{Laz04a}*{Theorem 1.6.1 and Remark 1.6.5}), we have
\begin{equation}\label{3} (A^d_{\overline{s}})^{\frac{d-1}{d}}\left(A^{'d}_{\overline{s}}\right)^{\frac{1}{d}}\le (g^*A_{\overline{s}})^{d-1}\cdot A'_{\overline{s}}.
\end{equation}
Now, combine (\ref{1}), (\ref{2}) and (\ref{3}), we have
$$ (A^d_{\overline{s}})^{\frac{d-1}{d}}\left(A^{'d}_{\overline{s}}\right)^{\frac{1}{d}}\le (g^*A_{\overline{s}})^{d-1}\cdot A'_{\overline{s}}\le A^{d-1}_{\overline{s}}\cdot (A_{\overline{s}}+E_{\overline{s}})\le A^d_{\overline{s}}+C'\sqrt{\varepsilon}.$$
Thus,
$$ A^{'d}_{\overline{s}}\le \left((A^d_{\overline{s}})^{\frac{1}{d}}+\frac{C'\sqrt{\varepsilon}}{(A^d_{\overline{s}})^{\frac{d-1}{d}}}\right)^d.$$
By the Serre's vanishing theorem and Lemma \ref{intersection},
$$ \mathrm{vol}(A)=A^d=A^d_{\overline{s}}=\mathrm{vol}(A_{\overline{s}})\text{ for every general }s\in \Spec R.$$
Moreover,  $A^{'d}_{\overline{s}}\ge \mathrm{vol}(D_{\overline{s}})-\varepsilon'$, and thus
$$
\begin{aligned}
\mathrm{vol}(D_{\overline{s}})&\le A^{'d}_{\overline{s}}+\varepsilon'
\\ &\le \left((A^d_{\overline{s}})^{\frac{1}{d}}+\frac{C'\sqrt{\varepsilon}}{(A^d_{\overline{s}})^{\frac{d-1}{d}}}\right)^d+\varepsilon'
\\ &\le \left(\mathrm{vol}(A)^{\frac{1}{d}}+\frac{C'\sqrt{\varepsilon}}{\mathrm{vol}(A)^{\frac{d-1}{d}}}\right)^d+\varepsilon'
\\ &\le \left(\mathrm{vol}(D)^{\frac{1}{d}}+\frac{C'\sqrt{\varepsilon}}{(\mathrm{vol}(D)-\varepsilon)^{\frac{d-1}{d}}}\right)^d+\varepsilon'.
\end{aligned}
$$
Therefore, for any $\varepsilon''>0$, there exists $s\in S'$ such that
$$ \mathrm{vol}(D_{\overline{s}})\le \mathrm{vol}(D)+\varepsilon'',$$
and this is the theorem we want to prove.
\end{proof}

\section{Proof of the main theorems}\label{F}
The goal of this section is to provide the definition of Fano type couples, and prove Theorem \ref{imain1}, \ref{surface}, Corollary \ref{coro1} and Theorem \ref{imain2}. Let us define the following notion.

\begin{definition}
Let $(X,\Delta)$ be a projective couple. We say that $(X,\Delta)$ is \emph{of Fano type} if there is an effective $\Q$-divisor $\Delta'$ on $X$ such that $(X,\Delta+\Delta')$ is a klt pair and $-(K_X+\Delta+\Delta')$ is ample.
\end{definition}

\begin{lemma}[{cf. \cite{Xu23}*{Lemma 3.1}}] \label{Fano type}
Let $(X,\Delta)$ be a projective pair with big $-(K_X+\Delta)$. Then $(X,\Delta)$ is of Fano type if and only if $\mathrm{lct}_{\sigma}(X,\Delta,-(K_X+\Delta))>1$.
\end{lemma}

\begin{proof}
The proof of ``if" side is the same as the proof of \cite{Xu23}*{Lemma 3.1}, and thus we only treat with the ``only if" side.

\smallskip

Since $(X,\Delta)$ is of Fano type, there is an effective $\Q$-divisor $\Delta'$ such that $(X,\Delta+\Delta')$ is klt and $K_X+\Delta+\Delta'\sim_{\Q}0$. Moreover, there is a positive number $\varepsilon_0>0$ such that $(X,\Delta+(1+\varepsilon_0)\Delta')$ is klt. Thus, for any prime divisor $E$ over $X$, $A_{X,\Delta+(1+\varepsilon_0)\Delta'}(E)>0$, and
$$ \frac{A_{X,\Delta}(E)}{\mathrm{ord}_E\Delta'}\ge 1+\varepsilon_0.$$
Therefore
$$ \frac{A_{X,\Delta}(E)}{\sigma_E(-(K_X+\Delta))}\ge 1+\varepsilon_0,$$
and taking $\mathrm{inf}_E$ on both sides gives us the proof.
\end{proof}

\begin{lemma} \label{Fa1}
Let $(X,\Delta)$ be a pair of Fano type over a field of characteristic $0$, and let
$$ (X_0,\Delta_0):=(X,\Delta)\dashrightarrow (X_1,\Delta_1)\dashrightarrow \cdots \dashrightarrow (X_n,\Delta_n)$$
be a $-(K_X+\Delta)$-MMP. Then for any $i$, $(X_i,\Delta_i)$ is of Fano type.
\end{lemma}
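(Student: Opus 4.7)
The plan is to induct on the length $n$ of the MMP, reducing to the case of a single step $\phi:X_0\dashrightarrow X_1$ of the $-(K_X+\Delta)$-MMP. The Fano type hypothesis on $(X_0,\Delta_0)=(X,\Delta)$ furnishes an effective $\Q$-divisor $\Delta_0'$ such that $(X_0,\Delta_0+\Delta_0')$ is klt and $K_{X_0}+\Delta_0+\Delta_0'\sim_{\Q}0$. The identification $-(K_{X_0}+\Delta_0)\sim_{\Q}\Delta_0'$ recasts our step as a $\Delta_0'$-negative step, and, more importantly, the numerical triviality $(K_{X_0}+\Delta_0+\Delta_0')\cdot C=0$ for every curve $C$ shows that $\phi$ is in fact $(K_{X_0}+\Delta_0+\Delta_0')$-trivial.

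I will then set $\Delta_1':=\phi_*\Delta_0'$ and propose it as the Fano-type complementary divisor for $(X_1,\Delta_1)$. Two of the three required conditions are routine: pushing the linear equivalence forward yields $K_{X_1}+\Delta_1+\Delta_1'\sim_{\Q}0$, and $-(K_{X_1}+\Delta_1)\sim_{\Q}\Delta_1'$ is big because $\Delta_0'$ is big and the natural restriction-and-extension map furnishes an inclusion $H^0(X_0,m\Delta_0')\hookrightarrow H^0(X_1,m\Delta_1')$, using that $\phi$ is an isomorphism outside a codimension-two subset of $X_1$ and that $X_1$ is normal.

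The main obstacle is verifying that $(X_1,\Delta_1+\Delta_1')$ is klt, which I will address by showing that $\phi$ is a crepant birational map between the two pairs $(X_0,\Delta_0+\Delta_0')$ and $(X_1,\Delta_1+\Delta_1')$. In the divisorial-contraction case with exceptional divisor $E\subset X_0$, the standard discrepancy equation combined with the $\Q$-linear triviality of both $K_{X_0}+\Delta_0+\Delta_0'$ and $\phi^*(K_{X_1}+\Delta_1+\Delta_1')$ gives $\bigl(a(E,X_1,\Delta_1+\Delta_1')+\mathrm{mult}_E(\Delta_0+\Delta_0')\bigr)\,E\sim_{\Q}0$; since $E\cdot C<0$ on any $\phi$-contracted curve $C$ forces $E$ to be numerically non-trivial on $X_0$, I conclude $a(E,X_1,\Delta_1+\Delta_1')=-\mathrm{mult}_E(\Delta_0+\Delta_0')>-1$ by the klt hypothesis on $(X_0,\Delta_0+\Delta_0')$, so that $\phi^*(K_{X_1}+\Delta_1+\Delta_1')=K_{X_0}+\Delta_0+\Delta_0'$ as $\Q$-divisors. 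In the flip case, I take a common resolution $\pi_0:W\to X_0$, $\pi_1:W\to X_1$ and apply the negativity lemma to both $\pi_0$ and $\pi_1$: the difference $\pi_1^*(K_{X_1}+\Delta_1+\Delta_1')-\pi_0^*(K_{X_0}+\Delta_0+\Delta_0')$ is supported on a common exceptional set and is $\Q$-linearly trivial, whence it vanishes. In either case, log discrepancies of the two pairs agree at every prime divisor over them, transferring klt from $(X_0,\Delta_0+\Delta_0')$ to $(X_1,\Delta_1+\Delta_1')$.

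Assembling these pieces, $\Delta_1'$ witnesses $(X_1,\Delta_1)$ as Fano type, completing the induction step and hence the lemma.
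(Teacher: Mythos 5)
Your argument is correct and follows the same strategy as the paper's: take the klt complement $\Delta'$ witnessing the Fano type property of $(X,\Delta)$, push it forward along the MMP, and verify that klt-ness, $\Q$-linear triviality of the log canonical class, and bigness of the anticanonical all survive. The paper condenses the klt/triviality preservation into a single citation of \cite{KM98}*{Theorem 3.7 (4)} (the contraction theorem), whereas you spell out the crepancy via the discrepancy equation in the divisorial case and the negativity lemma in the flip case, and separately justify bigness through the $H^0$-inclusion; these are exactly the standard details that the citation is shorthand for.
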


\begin{proof}
Suppose $\Delta'$ is an effective $\Q$-Weil divisor on $X$ such that $(X,\Delta+\Delta')$ is klt and $K_X+\Delta+\Delta'$ is an anti-ample $\Q$-Cartier divisor. Moreover, there is an effective $\Q$-Weil divisor $\Delta''$ on $X$ with $(X,\Delta+\Delta'+\Delta'')$ klt and
$$ K_X+\Delta+\Delta'+\Delta''\sim_{\Q} 0.$$
By the contraction theorem (cf. \cite{KM98}*{Theorem 3.7 (4)}), $(X_i,\Delta_i+\Delta'_i+\Delta''_i)$ is klt and $K_{X_i}+\Delta_i+\Delta'_i+\Delta''_i\sim_{\Q} 0$. Moreover, $-(K_{X_i}+\Delta_i+\Delta'_i)$ is big for any $i$, and thus we obtain the assertion.
\end{proof}

\begin{lemma} \label{Fa2}
Let $(X,\Delta)$ be a pair of Fano type over a field of characteristic $0$, and let us assume that $-(K_X+\Delta)$ is a big and semiample $\Q$-Cartier $\Q$-divisor on $X$. Suppose that $f:X\to Y$ is the semiample fibration associated to $-(K_X+\Delta)$. Then $Y$ is of Fano type.
\end{lemma}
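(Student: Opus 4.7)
Since $-(K_X+\Delta)$ is big, the Iitaka dimension of $-(K_X+\Delta)$ equals $\dim X$, so the semiample fibration $f\colon X\to Y$ is birational, and there exists an ample $\Q$-Cartier $\Q$-divisor $H$ on $Y$ with $-(K_X+\Delta)\sim_{\Q}f^*H$. Let $\Delta'$ be an effective $\Q$-divisor provided by the Fano type property of $(X,\Delta)$, so that $(X,\Delta+\Delta')$ is klt and $K_X+\Delta+\Delta'\sim_{\Q}0$. Define $\Gamma:=f_*\Delta$ and $\Gamma':=f_*\Delta'$, which are effective $\Q$-Weil divisors on $Y$. The plan is to show that $(Y,\Gamma)$ itself witnesses the Fano type property.

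The first step is to push the basic relations forward along $f$. Because $f$ is birational, $k(X)=k(Y)$, so applying $f_*$ to $K_X+\Delta+\Delta'\sim_{\Q}0$ gives $K_Y+\Gamma+\Gamma'\sim_{\Q}0$; in particular, $K_Y+\Gamma+\Gamma'$ is $\Q$-Cartier. Similarly, applying $f_*$ to $-(K_X+\Delta)\sim_{\Q}f^*H$ and using $f_*f^*H=H$ produces $-(K_Y+\Gamma)\sim_{\Q}H$, which is ample on $Y$.

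The main step is to establish the equality $K_X+\Delta+\Delta'=f^*(K_Y+\Gamma+\Gamma')$. Set $F:=f^*(K_Y+\Gamma+\Gamma')-(K_X+\Delta+\Delta')$. Then $F$ is $\Q$-Cartier and $\Q$-linearly trivial, and it is $f$-exceptional because $f_*F=(K_Y+\Gamma+\Gamma')-(K_Y+\Gamma+\Gamma')=0$. Applying the negativity lemma to both $F$ and $-F$, each of which is $f$-nef since $\Q$-linearly trivial, forces $F=0$. I expect this appeal to the negativity lemma to be the main technical hurdle, though it is standard once the setup is in place.

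With this equality in hand, for any prime divisor $E$ over $Y$ we choose a birational model $h\colon W\to X$ that extracts $E$ and compare coefficients on $W$ to obtain
\[
A_{Y,\Gamma+\Gamma'}(E)=A_{X,\Delta+\Delta'}(E)>0.
\]
Hence $(Y,\Gamma+\Gamma')$ is klt, and since $\Gamma'\ge 0$, the pair $(Y,\Gamma)$ is klt as well. Combined with $-(K_Y+\Gamma)\sim_{\Q}H$ being ample, this shows that $Y$ is of Fano type.
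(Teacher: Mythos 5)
Your proof is correct and follows essentially the same route as the paper: both hinge on the observation that the semiample fibration $f$ is crepant with respect to $(X,\Delta)$ (the paper states this as ``$f$ is $(K_X+\Delta)$-trivial''), so the klt property descends to $(Y,f_*\Delta)$, while $-(K_Y+f_*\Delta)\sim_{\Q}H$ is ample. You spell out the negativity-lemma step establishing $K_X+\Delta+\Delta'=f^*(K_Y+\Gamma+\Gamma')$ and explicitly push forward the complement $\Delta'$, whereas the paper's two-line proof invokes crepant descent of klt-ness without detailing it; the content is the same.
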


\begin{proof}
Note that $f$ is $(K_X+\Delta)$-trivial, therefore $(Y,\Delta_Y)$ is klt if we let $\Delta_Y:=f_*\Delta$. Moreover, $-(K_Y+\Delta_Y)$ is ample, and hence the assertion is proved.
\end{proof}

Let us prove the main theorems.

\begin{proof}[Proof of Theorem \ref{imain1}]
Let us prove (a)$\implies$(b); Note that $-(K_{X_{\overline{\eta}}}+\Delta_{\overline{\eta}})$ is big by Theorem \ref{main111}. Since $(X_s,\Delta_s)$ is klt for any $s\in S'$, $(X,\Delta)$ becomes klt after shrinking $S$ (cf. \cite{Laz04b}*{Example 9.5.6}), thus making $(X_{\overline{\eta}},\Delta_{\overline{\eta}})$ klt.

\smallskip

Using Theorem \ref{Xuapp}, there exists a log smooth model $(X',E)$ over $X_{\overline{\eta}}$ and a quasi-monomial valuation $\nu_0\in \mathrm{QM}\left(X',E=\sum E_i\right)$ that computes $\mathrm{lct}_{\sigma}(X_{\overline{\eta}},\Delta_{\overline{\eta}},-(K_{X_{\overline{\eta}}}+\Delta_{\overline{\eta}}))$. Let $E_1,\cdots,E_r\subseteq E$ be prime divisors that define $\nu_0$, and $\alpha_0\in \R^r_{\ge 0}$ a tuple satisfying $\nu_0=\nu_{(X',E),\alpha_0}$.

\smallskip

Since the morphism $X'\to X_{\overline{\eta}}$ and $E$ are determined by finitely many equations in $\overline{k(\eta)}$, after shrinking $S$ and replacing $S$ with a finite cover, we may assume that there is a log smooth model $(X'_S,E_S)$ over $X$ such that $(X'_S,E_S)\times_S \overline{\eta}=(X',E)$ (cf. Remark \ref{remark}). By generic flatness (cf. \cite{Stacks}*{Proposition 052A}), we may further assume that
\begin{itemize}
\item $S$ is regular,
    \item $E_S$ is flat over $S$ and has simple normal crossing,
    \item $X\to S$ and $X'_S\to S$ are flat with reduced and irreducible fibers, and
    \item $X'_S\to X$ is a fiberwise birational morphism over $S$.
\end{itemize}

\smallskip

If we consider Proposition \ref{disc 1}, then we have that after shrinking $S$,
\begin{equation} \label{main1}
A_{X_{\overline{\eta}},\Delta_{\overline{\eta}}}(\nu_{(X',E),\alpha_0})=A_{X_s,\Delta_s}(\nu_{(X'_s,E_s),\alpha_0})
\end{equation}
for any $s\in S$.

\smallskip

Let $\alpha:=(\alpha_1,\cdots,\alpha_r)\in \Q^r_{\ge 0}$, and let $n$ be the least common multiple of the denominators of $\alpha_1,\cdots,\alpha_r$. Take a weighted blow-up $X''_S\to X'_S$ of weight $(n\cdot \alpha_1,\cdots,n\cdot \alpha_r)$, $X''_s:=X''|_s$ for each $s\in S'$, and let $X'':=X''_S\times_S \overline{\eta}$. Note that $X''_s$ and $X''$ are weighted blowups of $(X'_s,E_s)$ and $(X',E)$ of weight $(n\cdot \alpha_1,\cdots,n\cdot \alpha_r)$ respectively (cf. Lemma \ref{hel} (d)).

\smallskip

The varieties and morphisms can be illustrated by the following diagram:
$$
\begin{tikzcd}
X''_{s}\ar[hook]{d}\ar["g_s"]{r}& X'_{s} \ar[hook]{d}\ar{r}& X_{s}\ar[hook]{d} \\
X''_S \ar{r}& X'_S \ar{r}& X \\
X'' \ar[hook]{u}\ar["g"]{r}& X' \ar[hook]{u}\ar{r}& X_{\overline{\eta}}\ar[hook]{u}
\end{tikzcd}
$$
The vertical morphisms are fiber products, and the horizontal morphisms are birational morphisms.
\smallskip

Let $s\in S'$, and let us note the facts we will use:

\begin{itemize}
    \item[(a)] $X''$ and $X''_s:=X''_S|_s$ are normal (cf. Lemma \ref{hel} (c)), and
    \item[(b)] if $F$ and $F_s$ are toroidal divisors extracted by $g$ and $g_s:=g_S|_s$, then
    \begin{equation} \label{ndkd1}
    \nu_{(X',E),\alpha}=\frac{1}{n}\cdot \mathrm{ord}_{F},\,\, \nu_{(X'_s,E_s),\alpha}=\frac{1}{n}\cdot \mathrm{ord}_{F_{s}}
    \end{equation}
    (cf. Lemma \ref{hel} (e)).
\end{itemize}

\smallskip

Let $s\in S'$, let $a>\sigma_{F_s}(D_s)$ be a rational number, and $h:X''_S\to X$ the composition of $X''_S\to X'_S$ and $X'_S\to X$. Denoted by $D:=-(K_X+\Delta)$. By the normality of $X''_s,X''$ and Lemma \ref{volume asymptotic order}, 
$$ \mathrm{vol}(h^*_sD_s-aF_{s})<\mathrm{vol}(h^*_sD_s)$$
holds for any $s\in S'$, and 
$$
\begin{aligned} 
\mathrm{vol}(h^*_{\overline{\eta}}D_{\overline{\eta}}-aF)&\le \mathrm{vol}(h^*_sD_s-aF_{s}) & (1)
\\ &<\mathrm{vol}(h^*_sD_s) &
\\ &=\mathrm{vol}(D_s) &
\\ &=\mathrm{vol}(D_{\overline{\eta}}) & (2)
\\ &=\mathrm{vol}(h^*_{\overline{\eta}}D_{\overline{\eta}}),&
\end{aligned}$$ 
where the semicontinuity of cohomology (cf. \cite{Kol23}*{Theorem 3.32.1}) is used in (1), and the constancy of anti-canonical volumes with Theorem \ref{main111} is applied in (2). Hence, $a>\sigma_{F}(D_{\overline{\eta}})$ by Lemma \ref{volume asymptotic order}. Thus, we have $$\sigma_{F_{s}}(D_s)\ge \sigma_{F}(D_{\overline{\eta}}) \text{ for any }s\in S'.$$
Consider the fact that the sets of divisorial valuations in $\mathrm{QM}(X',E)$ and $\mathrm{QM}(X'_s,E_s)$ form dense sets. By applying Equation (\ref{ndkd1}), and Lemma \ref{yae}, we obtain
\begin{equation}\label{main2}
\sigma_{\nu_{0s}}(D_s)\ge \sigma_{\nu_0}(D_{\overline{\eta}})\text{ for any }s\in S',
\end{equation}
where $\nu_{0s}:=\nu_{(X'_s,E_s),\alpha_0}$. Combining (\ref{main1}) with (\ref{main2}) results in $$
\begin{aligned}
\mathrm{lct}_{\sigma}(X_{\overline{\eta}},\Delta_{\overline{\eta}},D_{\overline{\eta}})&=\frac{A_{X_{\overline{\eta}},\Delta_{\overline{\eta}}}(\nu_0)}{\sigma_{\nu_0}(D_{\overline{\eta}})}
\\ &\ge \frac{A_{X_s,\Delta_s}(\nu_{0s})}{\sigma_{\nu_{0s}}(D_s)}
\\ &\ge \mathrm{lct}_{\sigma}(X_s,\Delta_s,D_s) & (3)
\end{aligned}$$ 
for any $s\in S'$, where we used \cite{Xu25}*{Lemma 1.60} in (3). Since $(X_s,\Delta_s)$ is of Fano type for any $s\in S'$, Lemma \ref{Fano type} ensures 
$$\mathrm{lct}_{\sigma}(X_s,\Delta_s,D_s)>1,$$
leading to $\mathrm{lct}_{\sigma}(X_{\overline{\eta}},\Delta_{\overline{\eta}},D_{\overline{\eta}})>1$. Thus, by Lemma \ref{Fano type}, $(X_{\overline{\eta}},\Delta_{\overline{\eta}})$ is of Fano type as desired.

\smallskip

Let us prove (b)$\implies$(a). Let us claim that after shrinking $S$ and replacing $S$ with a finite cover, we may assume that for any $s\in S$, $(X_s,\Delta_s)$ is of Fano type. Indeed, there is an effective $\Q$-divisor $\Delta'$ on $X_{\overline{\eta}}$ such that 
$$K_{X_{\overline{\eta}}}+\Delta_{\overline{\eta}}+\Delta'\sim_{\Q} 0.$$ 
Moreover, since $\Delta$ is determined by finitely many equations in $\overline{k(\eta)}$, we have that after shrinking $S$ and replacing $S$ with a finite cover, we may assume that there is an effective $\Q$-divisor $\Delta'_S$ such that $\Delta'_S\times_S \overline{\eta}=\Delta'$, and 
$$K_X+\Delta+\Delta'_S\sim_{\Q,S}0$$
(cf. Remark \ref{remark}). If we consider the log resolution of $f':X'\to (X_{\overline{\eta}},\Delta_{\overline{\eta}}+\Delta')$, after further shrinking $S$ and replacing $S$ with a finite cover, we can find a model $f'_S:X'_S\to (X,\Delta+\Delta'_S)$ of $f'$ over $S$ that is a log resolution of $(X,\Delta+\Delta'_S)$. Moreover, for every prime divisor $E$ on $X'_S$,
$$ A_{X,\Delta+\Delta'_S}(E)=A_{X_{\overline{\eta}},\Delta_{\overline{\eta}}+\Delta'}(E_{\overline{\eta}})>0,$$
and therefore we can deduce that $(X,\Delta+\Delta'_S)$ is klt (cf. Remark \ref{remark}). By \cite{Laz04b}*{Example 9.5.6 and Theorem 9.5.32}, for a general $s\in S$, $K_{X_s}+\Delta_s+\Delta'_S|_s\sim_{\Q} 0$ and $(X_s,\Delta_s+\Delta'_S|_s)$ is klt. Furthermore, since $-(K_{X_{\overline{\eta}}}+\Delta_{\overline{\eta}})$ is big, we obtain that there is an ample $\Q$-divisor $A$, and an effective $\Q$-divisor $E$ on $X_{\overline{\eta}}$ such that $-(K_{X_{\overline{\eta}}}+\Delta_{\overline{\eta}})=A+E$. After further shrinking $S$ and taking a finite cover of $S$, we may assume that there are $A_X,E_X$ on $X$ such that $-(K_X+\Delta)=A+E$, $A_X\times_S \overline{\eta}=A$, $E_X\times_S \overline{\eta}=E$, and $A_X$ is ample over $S$. Therefore, for a general $s\in S$, $-(K_{X_s}+\Delta_s)=A_X|_s+E_X|_s$, $A_X|_s$ is ample on $X_s$, $E_X|_s$ is effective on $X_s$, and hence $-(K_{X_s}+\Delta_s)$ is big on $X_s$. Thus, $(X_s,\Delta_s)$ is of Fano type.

We may assume that $X_{\overline{\eta}}$ is $\Q$-factorial after taking a $\Q$-factorialization, shrinking $S$, and taking a finite cover of $S$ (cf. \cite{Fuj11}*{Theorem 4.1}).

\smallskip

Let
$$ (X_0,\Delta_0):=(X_{\overline{\eta}},\Delta_{\overline{\eta}})\overset{f_1}{\dashrightarrow} (X_1,\Delta_1)\overset{f_2}{\dashrightarrow} \cdots \overset{f_n}{\dashrightarrow} (X_n,\Delta_n)$$
be a $-(K_{X_{\overline{\eta}}}+\Delta_{\overline{\eta}})$-MMP. Such an MMP exists because $(X_{\overline{\eta}},\Delta_{\overline{\eta}})$ is of Fano type, and any pair that is of Fano type has any $D$-MMP for an effective divisor $D$ (cf. \cite{BCHM10}*{Corollary 1.3.1}). Moreover, $(X_n,\Delta_n)$ is of Fano type by Lemma \ref{Fa1}, and thus we may consider the algebraic fiber space $X_n\to Y$ associated with a semiample divisor $-(K_{X_n}+\Delta_n)$.

\smallskip

Let $H$ be an ample divisor on $Y$ such that $g^*H=-(K_{X_n}+\Delta_n)$. Since $f_i$ and $g$ are determined by finitely many equations in $\overline{k(\eta)}$, after shrinking $S$ and replacing $S$ with a finite cover, we may assume that there are $S$-maps $f_{iS}:X_{(i-1)S}\dashrightarrow X_{iS}$ and $g_S:X_{nS}\to Y_S$ such that $f_{iS}\times_S \overline{\eta}=f_i$ and $g_S\times_S \overline{\eta}=g$ (cf. Remark \ref{remark}). We may also assume that there is an ample divisor $H_S$ over $Y_S$ such that $H_S\times_S \overline{\eta}=H$. Furthermore, we may assume that $f_{iS}$ are $-(K_{X_{(i-1)S}}+\Delta_{i-1})$-negative over $S$. Indeed, let
$$
\begin{tikzcd}
& X'' \ar["\varphi"']{ld}\ar["\varphi'"]{rd}&\\
X_{i-1}\ar[dashed,"f_i"']{rr}& &X_i
\end{tikzcd}
$$
be a common resolution of $f_i$, and let $D_i:=-(K_{X_{iS}}+\Delta_{i})$. Then, since $f_i$ is $D_{i\overline{\eta}}$-negative,
$$ \varphi^*D_{(i-1)\overline{\eta}}=(\varphi')^*D_{i\overline{\eta}}+E'$$
for some effective $\varphi'$-exceptional divisor on $X''$ with $\mathrm{Exc}(\varphi')\subseteq \Supp E'$. After shrinking $S$ and replacing $S$ with a finite cover, we may assume that there exist models $\varphi_S:X''_S\to X_{(i-1)S}$, $\varphi'_S:X''_S\to X_{iS}$, and $E'_S$ of $\varphi,\varphi',E'$ over $S$ with the commutative diagram
$$
\begin{tikzcd}
& X''_S \ar["\varphi_S"']{ld}\ar["\varphi'_S"]{rd}&\\
X_{(i-1)S}\ar[dashed,"f_{iS}"']{rr}& &X_{iS}
\end{tikzcd}
$$
that is a common resolution of $f_{iS}$, and the inclusion $\mathrm{Exc}(\varphi'_S)\subseteq \Supp E'_S$ (cf. Remark \ref{remark}). Moreover,
$$ \varphi^*_SD_{i-1}=(\varphi'_S)^*D_i+E'_S,$$
and therefore $f_{iS}$ is a $-(K_{X_{(i-1)S}}+\Delta_{i-1})$-negative map over $S$. Let $\Delta_{YS}:=g_{S*}\Delta_{nS}$, and let $n$ be a positive integer such that $nH_S$ is ample.

\smallskip

After shrinking $S$, we have that for any $s\in S$, $H_s:=H_S\times_Ss$ is an ample divisor on $Y_s:=Y_S\times_S s$. Since
\begin{equation} \label{hdd}
X_{0s}\overset{f_{1s}}{\dashrightarrow} X_{1s}\overset{f_{2s}}{\dashrightarrow} \cdots \overset{f_{ns}}{\dashrightarrow} X_{ns}
\end{equation}
is a sequence of $-(K_{X_{(i-1)s}}+\Delta_{(i-1)s})$-negative maps, where $X_{is}:=X_{iS}|_s$, $\Delta_{is}:=\Delta_{iS}|_s$ and $f_{is}:=f_{iS}|_s$, we have that $Y_s$ is of Fano type by Lemma \ref{Fa1} and \ref{Fa2}.

\smallskip

Therefore, by the Kodaira vanishing theorem, $H^1(Y_s,\mathcal{O}_{Y_s}(mnH_s))=0$ for any positive integer $m$. This implies that $R^1h_*\mathcal{O}_{Y_S}(mnH_S)=0$ by the upper semicontinuity of cohomology (cf. \cite{Har77}*{Theorem 12.8}). Thus, by \cite{Har77}*{Corollary 12.11}, $h_{S*}\mathcal{O}_{Y_S}(mnH_S)$ is torsion-free at $s$, and
$$ h_{S*}\mathcal{O}_{Y_S}(mnH_S)\otimes_{\mathcal{O}_S} k(s)=H^0(Y_s,\mathcal{O}_{Y_s}(mnH_s)).$$
for any positive integer $m$. Thus,
$$ \mathrm{vol}(H_s)=\limsup_{m\to \infty}\frac{\mathrm{rank}\,g_{S*}\mathcal{O}_{Y_S}(mH_S)}{\frac{m^{\dim X}}{(\dim X)!}}=\mathrm{vol}(H_{s'})$$
for all $s,s'\in S$. Moreover, by (\ref{hdd}), we obtain
$$ \mathrm{vol}(-(K_{X_s}+\Delta_s))=\mathrm{vol}(-(K_{X_{s'}}+\Delta_{s'}))$$
for all $s,s'\in S$.
\end{proof}

\begin{remark}
We anticipate that (\ref{hdd}) is a $-(K_{X_s}+\Delta_s)$-MMP, and this holds when $S$ is a variety over $\C$ (see \cite{CLZ25}*{Lemma 5.5}).
\end{remark}

\begin{proof}[Proof of Theorem \ref{surface}]
Note that the proof is almost a copy of \cite{GT16}*{Proof of Theorem 1.1}.

\smallskip

\noindent \textbf{Step 1}. Let us prove $X$ is $\Q$-Gorenstein. Assume that $X$ is smooth in codimension $1$. According to \cite{EGAIV2}*{9.9.2 (viii)}, the set
$$ E:=\{x\in X\mid \mathcal{O}_X|_{X_x}\text{ satisfies }(S_2)\} $$
is constructible and includes $S'$. This implies that the generic point $\eta$ of $S$ belongs to $E$, and consequently, after shrinking $S$, we can see that $X$ is normal. Additionally, for each $s\in S'$, $X_{s}$ has rational singularities and is $\Q$-factorial by \cite{KM98}*{Theorem 5.22} and the fact that any klt surface is $\Q$-factorial. This leads to the conclusion that $X$ is $\Q$-Gorenstein, as demonstrated by \cite{dFEM11}*{Theorem B.1}.

\noindent \textbf{Step 2}. We know that $X_{\overline{\eta}}$ is a surface over $\overline{k(\eta)}$, and thus we may consider the Zariski decomposition
$$ D:=-K_{X_{\overline{\eta}}}=P+N.$$
After shrinking $S$ and replacing $S$ with a finite cover, we may assume that there are $\R$-divisors $P_S,N_S$ on $X$ such that $P_S\times_S \overline{\eta}=P$, and $N_S\times_S \overline{\eta}=N$. By the generic flatness (cf. \cite{Stacks}*{Proposition 052A}), after further shrinking $S$, we may assume that both $P_S,N_S$ are flat over $S$.

\smallskip

\noindent \textbf{Step 3}. Let $N=\sum a_iC_i$ be the decomposition of $N_S$ into prime divisors. After shrinking $S$ and replacing $S$ with a finite cover, we may assume that there are irreducible $C_{iS}\subseteq X$ such that $N_S=\sum a_iC_{iS}$ and $C_{iS}\times_S \overline{\eta}=C_i$ (cf. Remark \ref{remark}). Let us denote by $P_s:=P_S|_s, N_s:=N_S|_s$, and $C_{is}:=C_{iS}|_s$ for any $s\in S$. Suppose $N(s)$ is the negative part of the Zariski decomposition of $D_s$. Our claim is
$$ N_s\le N(s).$$

\smallskip

For any general closed point $s\in S$,
$$ P_s\cdot C_{is}=P_S\cdot C_{iS}=P\cdot C_i=0\text{ for all }i.$$
If $P_s$ is nef, then $D_s=P_s+N_s$ is the Zariski decomposition of $D_s$, and thus the claim holds by the uniqueness of the Zariski decomposition (cf. \cite{Fuj79}*{(1.12) Theorem}). Let us assume that $P_s$ is not nef.

\smallskip

Note that $P_s$ is pseudo-effective (cf. \cite{Fuj79}*{(1.8) Lemma}), and therefore $P_s$ has the Zariski decomposition $P_s=P'+N'$. Let $N'':=N_s+N'$, and $N''=\sum a'_iC'_i$ be the decomposition of $N''$ into prime divisors. It is enough to show that $D_s=P'+N''$ is the Zariski decomposition of $D_s$.

\smallskip

Let us assume that $P'\cdot C_{is}\ne 0$. Then $N''\cdot C_{is}<0$, and therefore $C_{is}$ is contained in the support of $N''$, and this contradicts the fact that $P_s=P'+N'$ is the Zariski decomposition. Hence, $P\cdot C_{is}=0$, and $C_{is}\cdot C'_j=0$ for all $i,j$. Thus, $N$ has a negative definite intersection matrix, and by the uniqueness of the Zariski decomposition, $D_s=P'+N''$ is the Zariski decomposition of $D_s$. Hence, the claim follows.

\smallskip

\noindent \textbf{Step 4}. Next, by the definition of Fano type pair, we see that for any $s\in S'$, there is an effective $\Q$-divisor $\Delta(s)$ on $X_s$ such that $(X_s,\Delta(s))$ is klt, and $K_{X_s}+\Delta(s)\sim_{\Q}0$. By the property of the Zariski decomposition (cf. \cite{Nak04}*{1.14 Proposition (2) in Chapter 3}),
$$ \Delta(s)\ge N(s)\ge N_s.$$
Therefore, $(X_s,N_s)$ is a Fano type pair for all $s\in S'$, and therefore $(X_{\overline{\eta}},N)$ is klt by \cite{Laz04b}*{Theorem 9.5.16}.

\smallskip

\noindent \textbf{Step 5}. Let us show $P_s$ is nef for all $s\in S'$. By \cite{GT16}*{Lemma 4.4}, we obtain there exists an effective $\Q$-divisor $D=\sum b_iD_i$ with irreducible $D_i$ on $X_{\overline{\eta}}$ such that $P\sim_{\Q} D$. If $P_s$ is not nef, then there exists some irreducible curve $C$ on $X_s$ such that $P_s\cdot C<0$. Then $C\subseteq \Supp D_s$, that is, there exists $i$ such that $C=D_{is}$. Note that after shrinking $S$ and replacing $S$ with a finite cover, we may assume $D_{is}$ are irreducible. On the other hand,
$$ P_s\cdot C=P_s\cdot D_{is}=P\cdot D_i\ge 0,$$
and this is a contradiction. Hence, $P_s$ is nef for all $s\in S'$. Note that $P_s\cdot N_{js}=P\cdot N_j=0$ for every irreducible component $N_j$ of $N$, and thus $N_s$ is zero or has a negative definite intersection matrix. Hence, by the uniqueness of the Zariski decomposition,
$$ -K_{X_s}=P_s+N_s$$
is the Zariski decomposition.

\smallskip

Observe that because $-K_{X_s}$ is big, $P_s$ is also big, and therefore $P^2=P^2_s\ge 0$, confirming that $P$ is big follows. Thus,
$$ -(K_{X_{\overline{\eta}}}+N)=P$$
is big and nef. We proved in Step 3 that $(X_{\overline{\eta}},N)$ is klt, hence establishing that $X_{\overline{\eta}}$ is of Fano type.
\end{proof}

\begin{proof}[Proof of Corollary \ref{coro1}]
Let us consider the set $\{X_i\}_{i\in \Z_{>0}}$, which is a bounded set of surfaces of Fano type, where the sequence $\{\mathrm{vol}(-K_{X_i})\}_{i\in \Z_{>0}}$ decreases strictly. By the definition of boundedness, there exists a projective morphism $X\to S$ of varieties, and for every $i$, a closed point $s_i\in S$ is present such that $X_i=X_{s_i}$. We can assume that the set $\{s_i\}$ is dense in $S$.

\smallskip

As in the proof of Theorem \ref{surface}, we can prove $X$ is $\Q$-Gorenstein, and we see $X_{\overline{\eta}}$ is of Fano type by Theorem \ref{surface}. Consequently, applying (b)$\implies$(a) in Theorem \ref{imain1}, it is deduced that there exists a constant $v>0$ such that for infinitely many $i$, $\mathrm{vol}(-K_{X_i})=v$. This contradicts the assertion that $\mathrm{vol}(-K_{X_i})$ is strictly decreasing.
\end{proof}

\begin{example}\label{ex}
Let us explain why the boundedness of $\mathcal{S}$ in Corollary \ref{coro1} is required. Consider a positive integer $d$ and a DCC subset $I\subseteq [0,1]$. According to \cite{HMX14}*{Theorem 1.3}, the set
$$ \left\{\mathrm{vol}(K_X+\Delta)\mid(X,\Delta)\text{ is lc, }\dim X=d,\text{ with }\Delta\text{ coefficients in }I\right\}$$
is a DCC set, and there is no assumption of boundedness at all. This suggests that Corollary \ref{coro1} holds without the condition that $\mathcal{S}$ is bounded. Specifically, one might propose the following.

\smallskip

\noindent \textit{Question}. Given a DCC subset $I\subseteq [0,1]$, is it true that the set
$$ \mathcal{S}\subseteq \left\{\mathrm{vol}(-(K_X+\Delta))\mid(X,\Delta)\text{ is of klt Fano type, }\dim X=d,\text{ with }\Delta\text{ coefficients in }I\right\}$$
is also a DCC set?

\smallskip

Unfortunately, the answer is negative even when $I=0$, as demonstrated by a counterexample in \cite{HMX14}*{Example 2.1.1}. The example shows that even if $\mathcal{S}$ is a birationally bounded family, the question has a negative answer. An additional counterexample provided in \cite{CLZ25}*{Remark 6.1} further illustrates that the question is incorrect even if there is a variety $X$ such that for any $X'\in \mathcal{S}$, there is a proper birational morphism $f:X'\to X$.
\smallskip

Define $X:=\P^2$, and consider two divisors $\Delta, \Delta' \in |\mathcal{O}_X(1)|$ such that $\Delta+\Delta'$ is a simple normal crossing (snc) divisor. Let $f_1: X'_1 \to X$ be the blowup along $\Delta\cap\Delta'$, and $E_1$ be the exceptional divisor. Set $c_n:=1-\frac{1}{n+1}$, $g_1:=f_1$, and let $\Delta_1,\Delta'_1$ be the strict transforms of $\Delta$ and $\Delta'$ on $X'_1$ as $\Delta_1$ and $\Delta'_1$, respectively. We then have the following equality:
$$ K_{X'_1}+c_n\Delta_1+c_n\Delta'_1+\left(1-\frac{2}{n+1}\right)E_1=g^*_1(K_X+c_n\Delta +c_n \Delta').$$
Subsequently, consider the blowup $f_2:X'_2\to X'_1$ along $\Delta_1\cap E_1$, let $E_2$ be the exceptional divisor, and denote the strict transforms of $\Delta$ and $\Delta'$ on $X'_2$ by $\Delta_2$ and $\Delta'_2$. Let $g_2:=f_1\circ f_2$. We have:
$$ K_{X'_2}+c_n\Delta_2+c_n\Delta'_2+\left(1-\frac{2}{n+1}\right)E_1+\left(1-\frac{3}{n+1}\right)E_2=g^*_2(K_X+c_n\Delta+c_n\Delta'). $$
Proceeding inductively, we construct a proper birational morphism $g_n:X'_n\to X$, with prime divisors $E_1,\cdots, E_n$ on $X'_n$ and strict transforms $\Delta_n,\Delta'_n$ of $\Delta,\Delta'$ on $X'_n$, satisfying:
$$ K_{X'_n}+c_n\Delta_n+c_n\Delta'_n+\sum_{i=1}^{n}\left(1-\frac{i+1}{n+1}\right)E_i=g^*_n(K_X+c_n\Delta+c_n \Delta'). $$
By contracting $E_1, \cdots, E_{n-1}$, a contraction $h'_n: X'_n \to X_n$ is obtained over $X$, and this is possible by \cite{BCHM10}*{Theorem 1.2 (1)}. Let $h_n:X_n\to X$ be the corresponding morphism with $\Delta_{X_n},\Delta'_{X_n}$ as the strict transforms of $\Delta_n,\Delta'_n$ on $X_n$. In particular, $h_n:X_n\to X$ only extracts the prime divisor $E_n$ out of those over $X$. By construction, this leads to:
$$ K_{X_n}+c_n\Delta_{X_n}+c_n\Delta'_{X_n}=h^*_n(K_X+c_n\Delta+c_n\Delta'). $$
This shows that $(X_n,c_n\Delta_{X_n}+c_n\Delta'_{X_n})$, and thus $(X_n,0)$ is a klt Fano type pair. Let $\mathcal{S}:=\{X_n\}_{n\in \Z_{>0}}$.

\smallskip

We claim that the set $\{\mathrm{vol}(-K_{X_n})\}_{n\in \Z_{>0}}$ can be a counterexample. We observe that
$$ K_{X'_n}+\Delta_n=g^*_n(K_X+\Delta),$$
implying that $K_{X_n}+\Delta_{X_n}=h^*_n(K_X+\Delta)$. Moreover, it is worth noting that $\Delta_{n}\cdot E_i=0$ for each $1\le i\le n-1$, which makes $\Delta_{n}$ $h'_n$-trivial. Therefore, by the cone theorem (cf. \cite{KM98}*{Theorem 3.7 (4)}), we have $\Delta_n=h'^*_n\Delta_{X_n}$, and thus
$$ \mathrm{vol}(-K_{X_n})=\mathrm{vol}(-h^*_n(K_X+\Delta)+\Delta_{X_n})=\mathrm{vol}(-g^*_n(K_X+\Delta)+\Delta_n).$$
Let us prove that the sequence $a_n:=\mathrm{vol}(-g^*_n(K_X+\Delta)+\Delta_n)$ is strictly decreasing, implying that $\{a_n\}$ cannot be a DCC set. Indeed,
$$ 
\begin{aligned}
a_1&=\mathrm{vol}(-g^*_1(K_X+\Delta)+\Delta_1)
\\ &=\mathrm{vol}(-g^*_2(K_X+\Delta)+f^*_2\Delta_1)
\\ &>a_2=\mathrm{vol}(-g^*_2(K_X+\Delta)+\Delta_2) & (1)
\\ &=\mathrm{vol}(-g^*_3(K_X+\Delta)+f^*_3\Delta_2)
\\ &>a_3=\mathrm{vol}(-g^*_3(K_X+\Delta)+\Delta_3) & (2)
\\ & =\cdots,
\end{aligned}
$$
where Lemma \ref{volume asymptotic order} is used in (1) and (2), and therefore $a_1>a_2>a_3>\cdots $ and we have the assertion. Consequently, a positive answer to the question is not assured, even if there is a variety $X$ such that for any $X'\in \mathcal{S}$, there is a proper birational morphism $f:X'\to X$.

Note that \cite{CLZ25}*{Remark 6.1} proves that the set $\mathcal{S}$ does not form a bounded family.
\end{example}

\smallskip

Let us prove Theorem \ref{imain2}. Observe that the proof follows the same structure as that of Theorem \ref{imain1}.

\begin{proof}[Proof of Theorem \ref{imain2}]
Let us prove (a)$\implies$(b); By Theorem \ref{main222}, $-(K_X+\Delta)$ is big, and $(X,\Delta)$ is klt by \cite{HW02}*{Theorem}. Theorem \ref{Xuapp} gives a log smooth model $(X',E)$ over $X$ and a quasi-monomial valuation $\nu_0\in \mathrm{QM}\left(X',E=\sum E_i\right)$ such that $\nu_0$ computes $\mathrm{lct}_{\sigma}(X,\Delta,-(K_{X}+\Delta))$. Let $E_1,\cdots,E_r\subseteq E$ be prime divisors that define $\nu_0$, and $\alpha_0\in \R^r_{\ge 0}$ a tuple satisfying $\nu_0=\nu_{(X',E),\alpha_0}$.

\smallskip

After shrinking $R$, there is a model $(X'_R,E_{1R},\cdots,E_{rR},E_R)$ of $(X',E_1,\cdots,E_r,E)$ over $R$ (cf. Remark \ref{remark}). Moreover, we may assume
\begin{itemize}
\item $R$ is regular,
    \item $E_R$ has simple normal crossing,
    \item $X_R\to \Spec R$ and $X'_R\to \Spec R$ are flat with reduced and irreducible fibers, and
    \item $X'_R\to X_R$ is a fiberwise birational morphism over $R$.
\end{itemize}

Additionally, by Proposition \ref{disc 2}, after shrinking $R$, we have \begin{equation} \label{main11}
A_{X,\Delta}(\nu_{(X',E),\alpha_0})=A_{X_{\overline{s}},\Delta_{\overline{s}}}(\nu_{(X'_{\overline{s}},E_{\overline{s}}),\alpha_0})
\end{equation}
for any $s\in \Spec R$.

\smallskip

Let $\alpha:=(\alpha_1,\cdots,\alpha_r)\in \Q^r_{\ge 0}$, and let $n$ be the least common multiple of the denominators of $\alpha_1,\cdots,\alpha_r$. Take a weighted blow-up $X''\to X'_R$ of weight $(n\cdot \alpha_1,\cdots,n\cdot \alpha_r)$, $X''_{\overline{s}}:=X''_R|_{\overline{s}}$ and $X'':=X''_R\times_R \C$. Then both $X''_{\overline{s}}$ and $X''$ are weighted blowups of $(X'_{\overline{s}},E_{\overline{s}})$ and $(X',E)$ of weight $(n\cdot \alpha_1,\cdots,n\cdot \alpha_r)$ respectively (cf. Lemma \ref{hel} (d))

\smallskip

The varieties and morphisms can be illustrated by the following diagram:
$$
\begin{tikzcd}
X''_{\overline{s}}\ar[hook]{d}\ar["g_{\overline{s}}"]{r}& X'_{\overline{s}} \ar[hook]{d}\ar{r}& X_{\overline{s}}\ar[hook]{d} \\
X''_R \ar{r}& X'_R \ar{r}& X_R \\
X''\ar{u}\ar["g"]{r}& X' \ar{u}\ar{r}& X\ar{u}
\end{tikzcd}
$$
The vertical morphisms are fiber products, and the horizontal morphisms are birational morphisms.
\smallskip

Let $s\in S'$, and let us note the facts we will use:

\begin{itemize}
    \item[(a)] $X''$ and $X''_{\overline{s}}:=X''_R|_{\overline{s}}$ are normal (cf. Lemma \ref{hel} (c)), and
    \item[(b)] if $F$ and $F_{\overline{s}}$ are toroidal divisors extracted by $g$ and $g_{\overline{s}}:=g_R|_{\overline{s}}$, then
    \begin{equation} \label{ndkd2}\nu_{(X',E),\alpha}=\frac{1}{n}\cdot \mathrm{ord}_{F},\,\, \nu_{(X'_{\overline{s}},E_{\overline{s}}),\alpha}=\frac{1}{n}\cdot \mathrm{ord}_{F_{\overline{s}}}
    \end{equation}
    (cf. Lemma \ref{hel} (e)).
\end{itemize}

\smallskip

Let us denote by $D:=-(K_{X_R}+\Delta_R)$, and $D_{\C}:=D\times_R \C=-(K_X+\Delta)$. Let $a>\sigma_{F_{{\overline{s}}}}(D_{\overline{s}})$ be any rational number, $h:X''\to X_R$ the composition of $X''\to X'_R$ and $X'_R\to X_R$, and let $h_{\C}:=h\times_R \C$. Then by the normality of $X''_{\overline{s}},X''$ and Lemma \ref{volume asymptotic order},
$$ \mathrm{vol}(h^*_{\overline{s}}D_{\overline{s}}-aF_{{\overline{s}}})<\mathrm{vol}(h^*_{\overline{s}}D_{\overline{s}}).$$
Therefore, for any $s \in S'$, we have:
$$
\begin{aligned}
\mathrm{vol}(h^*_{\C}D_{\C}-aF) &\leq \mathrm{vol}(h^*_{\overline{s}}D_{\overline{s}}-aF_{\overline{s}}) & (1)
\\ &< \mathrm{vol}(h^*_{\overline{s}}D_{\overline{s}})
\\ &= \mathrm{vol}(D_{\overline{s}})
\\ &= \mathrm{vol}(D_{\C}) & (2)
\\ &= \mathrm{vol}(h^*_{\C}D_{\C}),
\end{aligned}$$
where the semicontinuity of cohomology (cf. \cite{Kol23}*{Theorem 3.32.1}) is used in (1), and the constancy of anti-canonical volumes with Theorem \ref{main222} is applied in (2). Therefore,
$$ \mathrm{vol}(h^*_{\C}D_{\C}-aF)<\mathrm{vol}(h^*_{\C}D_{\C}),$$
and hence $a>\sigma_{F}(D_{\C})$ by Lemma \ref{volume asymptotic order}. Thus,
\begin{equation} \label{main22} 
\sigma_{F_{\overline{s}}}(D_{\overline{s}})\ge \sigma_{F}(D_{\C}) \text{ for any }s\in S'.\end{equation}
Combining (\ref{main11}) with (\ref{main22}) gives us
\begin{equation} \label{bound}
\frac{A_{X,\Delta}(F)}{\sigma_{F}(D_{\C})} \ge \frac{A_{X_{\overline{s}},\Delta_{\overline{s}}}(F_{\overline{s}})}{\sigma_{F_{\overline{s}}}(D_{\overline{s}})}\text{ for any }s\in S'.
\end{equation}
Since $(X_{\overline{s}},\Delta_{\overline{s}})$ is globally $F$-regular, by Theorem \ref{x} we have that there is an effective $\Q$-divisor $\Delta'$ such that
\begin{itemize}
    \item $(X_{\overline{s}},\Delta_{\overline{s}}+\Delta')$ is globally $F$-regular, and
    \item $K_{X_{\overline{s}}}+\Delta_{\overline{s}}+\Delta'\sim_{\Q}0$.
\end{itemize}
Moreover, by \cite{BSTZ10}*{Main Theorem}, and Lemma \ref{cri}, we obtain that there is a rational number $\varepsilon_0>0$ such that $\tau(X_{\overline{s}},\Delta_{\overline{s}}+(1+\varepsilon_0)\Delta')=\mathcal{O}_{X_{\overline{s}}}$ (note that $K_{X_{\overline{s}}}+\Delta_{\overline{s}}$ is $\Q$-Cartier, and let $\mathfrak{a}$ be the ideal sheaf of $n\Delta'$ for a sufficiently large positive integer $n$). Hence,
$$ \tau\left(X_{\overline{s}},\Delta_{\overline{s}},\frac{1}{m}|m(1+\varepsilon_0)D_{\overline{s}}|\right)=\mathcal{O}_{X_{\overline{s}}}$$
for any sufficiently divisible $m$.

\smallskip

Let $f'':X'''\to X_{\overline{s}}$ be a proper birational morphism, and $F'$ be a divisor on $X'''$ such that 
$$\mathfrak{b}(|m(1+\varepsilon_0)D_{\overline{s}}|)\cdot \mathcal{O}_{X'''}=\mathcal{O}_{X'''}(-F').$$ Using Theorem \ref{test to multiplier}, we get $$(f'')_*\mathcal{O}_{X'''}\left(\ceil{K_{X'''}-(f'')^*(K_{X_{\overline{s}}}+\Delta_{\overline{s}})-\frac{1}{m}F'}\right)=\mathcal{O}_{X_{\overline{s}}}$$
and $$ \frac{A_{X_{\overline{s}},\Delta_{\overline{s}}}(F_{\overline{s}})}{\frac{1}{m}\mathrm{ord}_{F_{\overline{s}}}|m(1+\varepsilon_0)D_{\overline{s}}|}>1,$$ leading to $$\frac{A_{X_{\overline{s}},\Delta_{\overline{s}}}(F_{\overline{s}})}{\sigma_{F_{\overline{s}}}(D_{\overline{s}})}\ge 1+\varepsilon_0.$$ Since the sets of divisorial valuations in $\mathrm{QM}(X',E)$ and $\mathrm{QM}(X'_{\overline{s}},E_{\overline{s}})$ are dense, (\ref{ndkd2}, \ref{bound}), and Lemma \ref{yae} show $$ \mathrm{lct}_{\sigma}(X,\Delta,D_{\C})=\frac{A_{X,\Delta}(\nu_0)}{\sigma_{\nu_0}(D_{\C})}>1,$$ proving $(X,\Delta)$ is of Fano type by Lemma \ref{Fano type}.

\smallskip

Let us prove (b)$\implies$(a). Note that after shrinking $R$, we may assume that $X_{\overline{s}}$ is globally $F$-regular for all $s\in \Spec R$ (cf. \cite{SS10}*{Theorem 1.2}). Furthermore, by further shrinking $R$ even more, we can assume that $X$ is $\Q$-factorial (see \cite{GLPSTZ15}*{Proposition 3.3}).

\smallskip

Let
$$ (X_0,\Delta_0):=(X,\Delta)\overset{f_1}{\dashrightarrow} (X_1,\Delta_1)\overset{f_2}{\dashrightarrow} \cdots \overset{f_n}{\dashrightarrow} (X_n,\Delta_n)$$
be a $-(K_{X}+\Delta)$-MMP. Such an MMP exists because $(X,\Delta)$ is of Fano type, and any pair that is of Fano type has any $D$-MMP for an effective divisor $D$ (cf. \cite{BCHM10}*{Corollary 1.3.1}). Moreover, $(X_n,\Delta_n)$ is of Fano type by Lemma \ref{Fa1}, and thus we may consider the algebraic fiber space $X_n\to Y$ associated to a semiample divisor $-(K_{X_n}+\Delta_n)$.

\smallskip

Let $H$ be an ample divisor $Y$ such that $g^*H=-(K_{X_n}+\Delta_n)$. After shrinking $R$, we may assume that there are $R$-maps $f_{iR}:X_{(i-1)R}\dashrightarrow X_{iR}$ and $g_R:X_{nR}\to Y_R$ such that $f_{iR}\times_R\C=f_i$ and $g_R\times_R \C=g$. We may further assume that there is an ample divisor $H_R$ over $Y_R$ such that $H_R\times_R \C=H$. Furthermore, we may assume that $f_{iR}$ are $-(K_{X_{(i-1)R}}+\Delta_{i-1})$-negative maps (which may not be elementary contractions). Let $\Delta_{YR}:=g_{R*}\Delta_{nR}$, and let $n$ be a positive integer such that $nH_R$ is Cartier.

\smallskip

Shrinking $R$, we have that for every $s\in \Spec R$, the divisor $H_{\overline{s}}:=H_R\times_R \overline{s}$ is ample on $Y_{\overline{s}}:=Y_R\times_R \overline{s}$. Note the sequence:
\begin{equation} \label{ddd}X_{0\overline{s}}\overset{f_{1\overline{s}}}{\dashrightarrow} X_{1\overline{s}}\overset{f_{2\overline{s}}}{\dashrightarrow} \cdots \overset{f_{n\overline{s}}}{\dashrightarrow} X_{n\overline{s}}.
\end{equation}
That is a sequence of maps that are $-(K_{X_{\overline{s}}}+\Delta_{\overline{s}})$-negative, with $X_{i\overline{s}}:=X_{iR}|_{\overline{s}}$ and $f_{i\overline{s}}:=f_{iR}|_{\overline{s}}$. Using (\ref{ddd}) and \cite{SS10}*{Proposition 6.3}, we can prove that $Y_{\overline{s}}$ is a globally $F$-regular variety.

\smallskip

Therefore, by \cite{Smi00}*{Corollary 4.3}, $H^1(Y_{\overline{s}},\mathcal{O}_{Y_{\overline{s}}}(mnH_{\overline{s}}))=0$ for any positive integer $m$. Thus, by the flat base change theorem (cf. \cite{Stacks}*{Lemma 02KH}) and the upper semicontinuity of cohomology (cf. \cite{Har77}*{Theorem 12.8}), $H^1(Y_R,\mathcal{O}_{Y_R}(mnH_R))=0$. Then \cite{Har77}*{Corollary 12.11} gives us that $h_{R*}\mathcal{O}_{Y_R}(mnH_R)$ is torsion-free at $s$, and
$$ h_{R*}\mathcal{O}_{Y_R}(mnH_R)\otimes_R k(s)=H^0(Y_s,\mathcal{O}_{Y_s}(mnH_s))$$
for any positive integer $m$. Thus, by the flat base change theorem (cf. \cite{Stacks}*{Lemma 02KH}) again,
$$ \mathrm{vol}(H_{\overline{s}})=\limsup_{m\to \infty}\frac{\mathrm{rank}\,g_{R*}\mathcal{O}_{Y_R}(mH_R)}{\frac{m^{\dim X}}{(\dim X)!}}=\mathrm{vol}(H_{\overline{s'}})$$
for all $s,s'\in \Spec R$. Moreover, by the fact that the composition of (\ref{ddd}) is a $-(K_{X_{\overline{s}}}+\Delta_{\overline{s}})$-negative map, we obtain
$$ \mathrm{vol}(-(K_{X_{\overline{s}}}+\Delta_{\overline{s}}))=\mathrm{vol}(-(K_{X_{\overline{s'}}}+\Delta_{\overline{s'}}))$$
for all $s,s'\in S$.
\end{proof}

\end{document}